\theoremstyle{plain}
\newtheorem{lemma}{Lemma}[section]
\newtheorem{proposition}{Proposition}[section]
\newtheorem{corollary}{Corollary}[section]
\newtheorem{theorem}{Theorem}[section]
\theoremstyle{definition}
\newtheorem{remark}{Remark}[section]
\newlist{todolist}{itemize}{2}
\setlist[todolist]{label=$\square$}
\begin{document}

\title{Deterministic Control of Stochastic Reaction-Diffusion Equations}
\newcommand\shorttitle{Deterministic Control of SPDEs}
\date{\today}

\author{Wilhelm Stannat}
\author{Lukas Wessels}
\newcommand\authors{Wilhelm Stannat and Lukas Wessels}

\affil{\small Technische Universit\"at Berlin}

\maketitle

\unmarkedfntext{\textit{Mathematics Subject Classification (2020) ---} Primary: 93E20, 49K45; Secondary: 49M05, 60H15, 35K57.}

\unmarkedfntext{\textit{Keywords and phrases ---} Stochastic reaction-diffusion equations, variational approach, optimal control, stochastic Schl\"ogl model, stochastic Nagumo equation, nonlinear conjugate gradient descent.}

\unmarkedfntext{\textit{Email}: \textbullet$\,$ stannat@math.tu-berlin.de $\,$\textbullet$\,$ wessels@math.tu-berlin.de}


\begin{abstract}
  We consider the control of semilinear stochastic partial differential equations (SPDEs) via deterministic controls. In the case of multiplicative noise, existence of optimal controls and necessary conditions for optimality are derived. In the case of additive noise, we obtain a representation for the gradient of the cost functional via adjoint calculus. The restriction to deterministic controls and additive noise avoids the necessity of introducing a backward SPDE. Based on this novel representation, we present a probabilistic nonlinear conjugate gradient descent method to approximate the optimal control, and apply our results to the stochastic Schl\"ogl model. We also present some analysis in the case where the optimal control for the stochastic system differs from the optimal control for the deterministic system.
\end{abstract}

\tableofcontents

\section{Introduction}
In this paper our objective is to investigate the optimal control of the semilinear SPDE
\begin{equation}\label{stateequation}
	\begin{cases}
		\mathrm{d} u^g_t = \left [ \Delta u^g_t + f\left ( u^g_t \right ) + b(t)g(t) \right ] \mathrm{d} t + \sigma(t,u^g_t) \mathrm{d}W^Q_t &\text{on } L^2(\Lambda)\\
		u^g_0(x) = u^0(x) & x\in \Lambda
	\end{cases}
\end{equation}
on bounded domains $\Lambda \subset \mathbb{R}$ with $f$ satisfying a one-sided Lipschitz condition. Here, 
the control $g$ is deterministic. Precise assumptions on the coefficients of \eqref{stateequation} will be 
stated at the beginning of the following section. In the case where $f(u) = ku(1-u)(u-a)$ for $k > 0$ and 
$a\in (0,1)$, equation \eqref{stateequation} is called the stochastic Schl\"ogl model. 

We will be interested in the optimal control of \eqref{stateequation} w.r.t. the following quadratic cost functional
\begin{equation}
	\begin{split}
		J(u^g,g) := & \mathbb{E} \left [ \frac{c_{\overline{\Lambda}}}{2} \int_0^T \int_{\Lambda} \left ( u^g_t(x) - u_{\overline{\Lambda}}(t,x) \right )^2 \mathrm{d}x \mathrm{d}t \right ]\\
		& + \mathbb{E} \left [ \frac{c_T}{2} \int_{\Lambda} \left ( u^g_T(x) - u^T(x) \right )^2 \mathrm{d}x \right ] + \frac{\lambda}{2} \int_0^T \int_{\Lambda} g^2(t,x) \mathrm{d}x \mathrm{d}t,
	\end{split}
\end{equation}
where $u_{\overline{\Lambda}}:[0,T]\times\Lambda \to \mathbb{R}$ and $u^T:\Lambda \to \mathbb{R}$ are a given running and terminal reference profile, respectively, and $c_{\overline{\Lambda}}, c_T, \lambda\geq 0$ are constants. The optimal control of the deterministic counterpart of \eqref{stateequation} (i.e. $\sigma\equiv 0$) has 
been well studied in the existing literature (see the monograph \cite{troeltzsch2010}). In particular, the 
optimal control of the deterministic Schl\"ogl model has been studied in a series of papers by Tr\"oltzsch, 
Ryll et al. (\cite{buchholz2013}, \cite{ryll2016}, \cite{ryll2017}). 

Recent years have seen a rising interest in the optimal control of SPDEs. Whereas there exists already a 
quite substantial literature on the dynamic programming approach to the optimal control of SPDEs (see, e.g., 
the monographs \cite{cerrai20012} and \cite{fabbri2017}, and in particular \cite{cerrai2001} and \cite{masiero2008} for the case of stochastic reaction-diffusion systems) direct variational methods have been much less applied. 

Results concerning existence of optimal controls of nonlinear SPDEs have first been obtained in \cite{lisei2002} in the case of the stochastic Navier-Stokes equation, see also the recent work \cite{cordoni2018} for a discussion of existence of optimal controls for the FitzHugh-Nagumo system with additive noise. Necessary first order conditions for optimality are discussed by Fuhrman et al. in \cite{fuhrman2018, fuhrman2016} within the mild approach to SPDEs. However, in \cite{fuhrman2018} the authors work under the assumption that the G\^ateaux derivative of the nonlinearity is bounded (cf. Assumption 2.1), which does not include the Schl\"ogl model; in \cite{fuhrman2016} the nonlinearity does cover the Schl\"ogl model, but the noise is only additive in this work. Since both these works formulate the problem in the setting of mild solutions to SPDEs, it does not seem tractable to generalize those results to a polynomial nonlinearity while considering a multiplicative noise term. In \cite{cordoni2019} the authors derive first order necessary conditions using a rescaling method which exploits a certain structure of the state equation. The problem of sufficient conditions for optimal controls has been investigated in 
\cite{oksendal2005}. In this paper, the author derives a sufficient maximum principle for a class of 
quasilinear SPDEs with a one-dimensional noise term. 

In the present paper we will be interested in the optimal control of \eqref{stateequation} within the variational approach to SPDEs. With a view towards the efficient numerical approximation we will restrict to deterministic controls, and from Section \ref{gradientsection} on also to additive noise. The restriction to additive noise allows us to avoid the backward SPDE for the adjoint state and to obtain a conceptually much simpler representation in terms of a backward random PDE, instead. However, using this adjoint state for numerical approximations would lead to non-adapted controls. A further restriction to deterministic controls allows us to derive a deterministic gradient in terms of the simpler adjoint state (cf. Theorem \ref{gradient}). This representation of the gradient gives rise to more efficient numerical approximations of (trivially adapted, since deterministic) optimal open-loop controls. We illustrate our approach in the case of the stochastic Schl\"ogl model.

The paper is organized as follows. In Section \ref{wellposed} we state precise assumptions for our analysis, show the well-posedness of the optimal control problem, and prove the existence of an optimal control. In Section \ref{firstorder} we prove the G\^ateaux differentiability of the solution map and the cost functional with respect to the control, and derive a necessary condition for a control to be locally optimal. In Section \ref{gradientsection} we restrict ourselves to the case of additive noise. In this setting, we derive a representation for the gradient of the cost functional as well as an equation for the adjoint state that is later on used in the numerical approximation of locally optimal solutions. Furthermore, we deduce the Stochastic Minimum Principle from the necessary conditions from the previous section. In Section \ref{gradientdescent} we present a probabilistic gradient descent method for the approximation of an optimal control. In Section \ref{application} we apply our results to two examples of the stochastic Schl\"ogl model. In the first example, we show how to accelerate traveling waves and change their direction of travel (cf. Subsection \ref{steering}). The second example is one situation, where the 
optimal control for the stochastic equation apparently differs from the optimal control for the deterministic counterpart (cf. Subsection \ref{spdecase}). Since we are not able to give a rigorous proof in this case, we also consider in Subsection \ref{sdecase} the simplified setting of a stochastic ordinary differential equation, where one can rigorously prove that the optimal control for the stochastic case and its deterministic counterpart are actually different. 

\section{General Setting and Well-Posedness of the Optimal Control Problem}\label{wellposed}
Consider the stochastic partial differential equation \eqref{stateequation} with homogeneous Neumann boundary conditions, where $\Lambda \subset \mathbb{R}$ is a bounded domain, $T>0$ is fixed, $ ( W_t^Q )_{t\in [0,T]}$ is a $Q$-Wiener process for some nonnegative, symmetric trace class operator $Q:L^2(\Lambda)\to L^2(\Lambda)$ on a given filtered probability space $( \Omega, \mathcal{F}, \left ( \mathcal{F}_t )_{t\in [0,T]} , \mathbb{P} \right )$, $u^0 \in L^6(\Omega ,\mathcal{F}_0 , \mathbb{P} ; L^2(\Lambda))$, $b\in L^{\infty}([0,T]\times\Lambda )$, $\sigma: [0,T]\times L^2(\Lambda) \to L(L^2(\Lambda))$ is Fr\'{e}chet differentiable for every fixed $t\in[0,T]$ and satisfies for all $t\in[0,T]$ and $u,v \in L^2(\Lambda)$
\begin{subequations}
	\begin{align}\label{lipschitzsigma}
		\left \|(\sigma(t,u)-\sigma(t,v))\circ \sqrt{Q} \right \|^2_{\text{HS}(L^2(\Lambda))}  
		& \leq C \|u-v\|^2_{L^2(\Lambda)},\\
		\label{growthsigma} \left \|\sigma(t,u)\circ \sqrt{Q} \right \|^2_{\text{HS}(L^2(\Lambda))} &\leq C \left (1+\|u\|^2_{L^2(\Lambda)} \right ),\\
		\label{frechetsigma} \left \|\sigma^{\prime}(t,u)v\circ \sqrt{Q} \right \|^2_{\text{HS}(L^2(\Lambda))} &\leq C \|v\|^2_{L^2(\Lambda)},
	\end{align}
\end{subequations} 
for some constant $C\in\mathbb{R}$, where $\| \cdot \|_{\text{HS}(L^2(\Lambda))}$ denotes the Hilbert-Schmidt norm on the space of all Hilbert-Schmidt operators on $L^2(\Lambda)$. Furthermore, $f:\mathbb{R} \to \mathbb{R}$ is continuously differentiable satisfying $f(0)=0$,
\begin{equation}\label{onesidedlipschitzcondition}
	\sup_{x\in\mathbb{R}} f^{\prime}(x) < \infty,
\end{equation}
and for all $x\in\mathbb{R}$
\begin{equation}\label{growthcondition}
	|f^{\prime}(x)| < C(1+|x|^2),
\end{equation}
for some constant $C\in\mathbb{R}$.
\begin{remark}
	\begin{enumerate}
		\item[1.] Notice that the upper bound of the derivative implies a one-sided Lipschitz condition, i.e. there exists a constant $\widetilde{\text{Lip}}_f\in\mathbb{R}$ such that
		\begin{equation}
			(f(u)-f(v))(u-v)\leq \widetilde{\text{Lip}}_f (u-v)^2,
		\end{equation}
		for all $u,v\in \mathbb{R}$.
		\item[2.] The nonlinearity in the Schl\"ogl equation satisfies these conditions since the leading coefficient of the polynomial is negative and the derivative is a polynomial of degree 2.
		\item[3.] A possible choice for the diffusion coefficient would be the Nemytskii operator associated with $\sigma:\mathbb{R} \to \mathbb{R}$,
		\begin{equation}
			\sigma(u) := \bar \sigma \min \{ u(u-1),M \},
		\end{equation}
		for some constants $\bar \sigma, M>0$. In the stochastic Schl\"ogl model, this choice imposes noise in particular on the wave front of the resulting traveling wave. For a more detailed discussion, see \cite{lord2014}, Example 10.2.
	\end{enumerate}
\end{remark}

Considering the Gelfand triple
\begin{equation}
	H^1(\Lambda) \subset L^6(\Lambda) \subset \left (H^1(\Lambda)\right )^{\ast},
\end{equation}
under appropriate assumptions on the control $g$ (to be specified later), the existence of a variational solution to equation \eqref{stateequation} in the space
\begin{equation}\label{solutionspace}
	E := L^2([0,T]\times\Omega, \mathrm{d}t \otimes \mathbb{P}; H^1(\Lambda)) \cap L^6(\Omega, \mathbb{P} ; C([0,T]; L^2(\Lambda)))
\end{equation}
is assured (see e.g. \cite{liu2015}, Example 5.1.8). 

Our objective is to study the optimal control problem associated with the state equation \eqref{stateequation}. Let $I_1 :L^6(\Omega; C([0,T]; L^2(\Lambda))) \to \mathbb{R}$ be given by
\begin{equation}\label{i1}
	I_1(v) \;:=\; \mathbb{E} \left [ \frac{c_{\overline{\Lambda}}}{2} \int_0^T \int_{\Lambda} \left ( v(t,x) - u_{\overline{\Lambda}}(t,x) \right )^2 \mathrm{d}x \mathrm{d}t + \frac{c_T}{2} \int_{\Lambda} \left ( v(T,x) - u^T(x) \right )^2 \mathrm{d}x \right ]
\end{equation}
and $I_2 : L^2([0,T]\times \Lambda ) \to \mathbb{R}$
\begin{equation}\label{i2}
	I_2(g) \;:=\; \frac{\lambda}{2} \int_0^T \int_{\Lambda} g^2(t,x) \mathrm{d}x \mathrm{d}t, 
\end{equation}
where $c_{\overline{\Lambda}}$, $c_T$, $\lambda \geq 0$, $u_{\overline{\Lambda}}\in L^2\left ( [0,T]\times 
\Lambda \right )$, and $u^T\in L^2(\Lambda)$. We want to minimize the cost functional
\begin{equation}
	J(g) \;:=\; I_1(u^g) + I_2(g),
\end{equation}
subject to the state equation \eqref{stateequation}, where
\begin{equation}
	g\in G_{\text{ad}} \;:=\; \left \{ g\in L^6\left ([0,T]\times \Lambda \right ) |\, \|g\|_{L^6([0,T]\times\Lambda)} \leq \kappa \right \},
\end{equation}
for given $\kappa \geq 0$.

\begin{remark} 
	The proof of the G\^ateaux differentiability of $g\mapsto u^g$ (see Proposition \ref{gateauxsolutionmap} 
	below), requires a moment bound of the solution in $L^6(\Omega \times [0,T]\times\Lambda)$ due to   
	the upper bound \eqref{growthcondition} on the derivative $f^\prime$ of the nonlinearity. Therefore the 
	minimal requirement for an admissible control is $g\in L^6([0,T]\times\Lambda)$. 
	
	In the work by Buchholz et al. (\cite{buchholz2013}) on the deterministic case, the set of admissible controls
	\begin{equation}
		\tilde{G}_{\text{ad}} \;:=\; \left \{ g\in L^{\infty}\left ([0,T]\times \Lambda \right ) |\, g_a \leq g(t,x) \leq g_b \text{ for a.a. } (t,x)\in [0,T]\times\Lambda \right \},
	\end{equation} 
	for some $g_a<g_b$ is considered. We could use the same set in our analysis as well.
\end{remark}

Throughout the whole paper, we are going to work under the aforementioned conditions. First we want to show that the control problem is well-posed. In order to do so, we need the following a priori bound for solutions of the state equation \eqref{stateequation}.

\begin{lemma}\label{aprioribound}
	There is a constant $C = C(b,f,\sigma,T,Q,u^0)$ such that for every solution $u^g\in E$ of the state equation \eqref{stateequation} associated with $g\in G_{\text{ad}}$ on the right hand side we have
	\begin{equation}\label{convergence}
		\mathbb{E} \left [ \sup_{t\in [0,T]} \left \| u^g_t \right \|_{L^2(\Lambda)}^6 + \left (\int_0^T \left \| u^g_t \right \|^2_{H^1(\Lambda)} \mathrm{d}t \right )^3 \right ] \leq \;C\left ( 1+ \int_0^T \left \| g(t) \right \|^6_{L^2(\Lambda)} \mathrm{d}t \right ).
	\end{equation}
\end{lemma}

\begin{proof}
	By It\^{o}'s formula (see \cite{liu2015}, Theorem 4.2.5), we have 
	\begin{equation}
		\begin{split}
			&\left \| u^g_t \right \|_{L^2(\Lambda)}^2\\
			= \;& \| u^0 \|_{L^2(\Lambda)}^2+2 \int_0^t {}_{(H^1(\Lambda))^{\ast}}\left \langle \Delta u^g_s , u^g_s \right \rangle_{H^1(\Lambda)} \mathrm{d}s + 2\int_0^t \left \langle f\left ( u^g_s \right ) ,u^g_s \right \rangle_{L^2(\Lambda)} \mathrm{d}s\\
			&+ 2\int_0^t \left \langle b(s) g(s), u^g_s \right \rangle_{L^2(\Lambda)} \mathrm{d}s + \int_0^t \| \sigma(s,u^g_s)\circ \sqrt{Q} \|^2_{\text{HS}(L^2(\Lambda))}\mathrm{d}s\\
			&+2 \int_0^t \left \langle u^g_s,\sigma(s,u^g_s) dW^Q_s \right \rangle_{L^2(\Lambda)}\\
			\leq \;&\| u^0 \|_{L^2(\Lambda)}^2\!-2\! \int_0^t \! \left \| \nabla u^g_s \right \|^2_{L^2(\Lambda)}\! \mathrm{d}s + \left (2 \,\widetilde{\text{Lip}}_f + C +\|b\|_{L^\infty([0,T]\times\Lambda)} \right )\! \int_0^t \! \left \| u^g_s \right \|^2_{L^2(\Lambda)}\! \mathrm{d}s\\
			\label{ito} &+ \|b\|_{L^\infty([0,T]\times\Lambda)} \int_0^t \left \| g(s) \right \|^2_{L^2(\Lambda)} \mathrm{d}s + T |\Lambda| C+ 2 \left|\int_0^t \left \langle u^g_s , \sigma(s,u^g_s) dW^Q_s \right \rangle_{L^2(\Lambda)} \right|,
		\end{split}
	\end{equation}
	where we used the growth bound \eqref{growthsigma} on $\sigma$ and that by the one-sided Lipschitz continuity of $f$ and $f(0)=0$, we have
	\begin{equation}
		\int_0^t \left \langle f(u^g_s) , u^g_s \right \rangle_{L^2(\Lambda)} \mathrm{d}s \leq \widetilde{\text{Lip}}_f \int_0^t \left \| u^g_s \right \|_{L^2(\Lambda)}^2 \mathrm{d}s.
	\end{equation}
	Taking both sides of equation \eqref{ito} to the power $3$, taking the supremum with respect to $t\in[0,T]$, and taking expectations yields
	\begin{equation}
		\begin{split}
			\mathbb{E} \left [ \sup_{t\in[0,T]} \left \| u^g_t \right \|_{L^2(\Lambda)}^6 \right ] \leq C \Bigg ( &1+\int_0^T \mathbb{E} \left [ \sup_{s\in [0,t]} \left \| u^g_s \right \|^6_{L^2(\Lambda)} \right ] \mathrm{d}t + \int_0^T \left \| g(t) \right \|^6_{L^2(\Lambda)} \mathrm{d}t\\
			&+ \mathbb{E} \left [ \sup_{t\in[0,T]} \left |\int_0^t \left \langle u^g_s , \sigma(s,u^g_s) dW^Q_s \right \rangle_{L^2(\Lambda)} \right |^3 \right ] \Bigg ).\label{bound0}
		\end{split}
	\end{equation}
	By Burkholder-Davis-Gundy inequality (see e.g. \cite{karatzas1991}), we get
	\begin{equation}
		\begin{split}
			\mathbb{E} \left [ \sup_{t\in[0,T]} \left | \int_0^t \left \langle u^g_s, \sigma(s,u^g_s) \mathrm{d} W^Q_s \right \rangle_{L^2(\Lambda)} \right |^3 \right ]\\
			\leq C \mathbb{E} \left [ \left \langle \int_0^{\cdot} \langle u^g_s , \sigma(s,u^g_s) \mathrm{d}W^Q_s \rangle_{L^2(\Lambda)} \right \rangle_T^{\frac32} \right ].\label{burkholder}
		\end{split}
	\end{equation}
	Now, we compute the quadratic variation. To this end, let $(e_k)_{k\geq 1}$ be an orthonormal basis of $L^2(\Lambda)$. Then
	\begin{equation}
		\begin{split}
			\left \langle \int_0^{\cdot} \langle u^g_s , \sigma(s,u^g_s) \mathrm{d}W^Q_s \rangle_{L^2(\Lambda)} \right \rangle_T &= \int_0^T \sum_{k=1}^{\infty} | \langle u^g_s , (\sigma(s,u^g_s)\circ \sqrt{Q}) e_k \rangle_{L^2(\Lambda)} |^2 \mathrm{d}s\\
			&\leq \int_0^T \sum_{k=1}^{\infty} \| u^g_s\|_{L^2(\Lambda)}^2 \| (\sigma(s,u^g_s)\circ \sqrt{Q}) e_k \|_{L^2(\Lambda)}^2 \mathrm{d}s\\
			&= \int_0^T \| \sigma(s,u^g_s)\circ \sqrt{Q}\|^2_{\text{HS}(L^2(\Lambda))} \|u^g_s\|^2_{L^2(\Lambda)} \mathrm{d}s.
		\end{split}
	\end{equation}
	Using the linear growth condition \eqref{growthsigma} we obtain together with equation \eqref{burkholder}
	\begin{equation}
		\begin{split}
			\mathbb{E}\! \left [ \sup_{t\in[0,T]} \left | \int_0^t \left \langle u^g_s, \sigma(s,u^g_s) \mathrm{d} W^Q_s \right \rangle_{L^2(\Lambda)} \right |^3 \right ]\! &\leq \!C\, \mathbb{E}\! \left [ \left (\int_0^T\! \|u^g_s\|_{L^2(\Lambda)}^2 + \| u^g_s \|_{L^2(\Lambda)}^4 \mathrm{d}s \right )^{\frac{3}{2}} \right ]\\
			\!&\leq\! C  \left ( 1 + \int_0^T \mathbb{E} \left [ \sup_{s\in [0,t]}\|u^g_s \|_{L^2(\Lambda)}^6 \right ] \mathrm{d}t \right )
		\end{split}
	\end{equation}
	Together with equation \eqref{bound0} and Gr\"onwall's inequality, this yields
	\begin{equation}\label{bound1}
		\mathbb{E} \left [ \sup_{t\in[0,T]} \left \| u^g_t \right \|_{L^2(\Lambda)}^6 \right ] \leq C \left ( 1 + \int_0^T \left \| g(t) \right \|^6_{L^2(\Lambda)} \mathrm{d}t \right ).
	\end{equation}
	Furthermore, from \eqref{ito}, we get
	\begin{equation}
		\begin{split}
			&\mathbb{E} \left [ \left ( \int_0^T \left \| \nabla u^g_t \right \|^2_{L^2(\Lambda)} \mathrm{d}t \right )^3 \right ]\\
			\leq& C\left ( 1+ \mathbb{E} \left [ \int_0^T \left \| u^g_t \right \|_{L^2(\Lambda)}^6 \mathrm{d}t \right ] + \int_0^T \left \| g(t) \right \|^6_{L^2(\Lambda)} \mathrm{d}t \right ).\label{bound2}
		\end{split}
	\end{equation}
	Putting together equations \eqref{bound1} and \eqref{bound2}, we get for some constant\\ $C = C(b,f,\sigma,T,Q,u^0)$
	\begin{equation}
		\mathbb{E} \left [ \left ( \int_0^T \left \| \nabla u^g_t \right \|^2_{L^2(\Lambda)} \mathrm{d}t \right )^3 \right ] \leq C\left ( 1+\int_0^T \left \| g(t) \right \|^6_{L^2(\Lambda)} \mathrm{d}t \right ). 
	\end{equation}
	Together with \eqref{bound1}, this completes the proof.
\end{proof}

As a consequence, the finiteness of all of the integrals appearing in the cost functional $J$ is assured. Furthermore, we get the following corollary.

\begin{corollary}\label{nirenberg}
	Let $E$ be defined as in \eqref{solutionspace}. Every solution $u^g\in E$ of the state equation \eqref{stateequation} associated with $g\in G_{ad}$ on the right hand side is in $L^6(\Omega\times [0,T]\times \Lambda)$.
\end{corollary}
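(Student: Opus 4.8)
The goal is to show that $\mathbb{E}\bigl[\int_0^T \int_\Lambda |u^g_t(x)|^6 \,\mathrm{d}x\,\mathrm{d}t\bigr] = \mathbb{E}\bigl[\int_0^T \|u^g_t\|_{L^6(\Lambda)}^6 \,\mathrm{d}t\bigr]$ is finite. The two pieces of information available are exactly the two quantities controlled by Proposition \ref{aprioribound}: the sixth moment of $\sup_{t}\|u^g_t\|_{L^2(\Lambda)}$ and the third moment of $\int_0^T \|u^g_t\|_{H^1(\Lambda)}^2\,\mathrm{d}t$. The plan is therefore to interpolate the spatial $L^6$-norm between the $H^1$- and $L^2$-norms via the Gagliardo--Nirenberg inequality, and then to distribute the two resulting factors by H\"older's inequality so that each lines up with one of these available moments.

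First I would invoke the Gagliardo--Nirenberg inequality on the bounded one-dimensional domain $\Lambda \subset \mathbb{R}$. With $n=1$, $p=6$, $q=r=2$ the interpolation exponent is $\theta = \tfrac12 - \tfrac16 = \tfrac13$, yielding a constant $C = C(\Lambda)$ with
\begin{equation*}
\|u\|_{L^6(\Lambda)} \leq C \, \|u\|_{H^1(\Lambda)}^{1/3} \, \|u\|_{L^2(\Lambda)}^{2/3}
\qquad \text{for all } u \in H^1(\Lambda);
\end{equation*}
on a bounded domain one works with the full $H^1$-norm (rather than with $\|\nabla u\|_{L^2}$ alone) in order to absorb the usual additive lower-order term. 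Raising this to the sixth power gives
\begin{equation*}
\|u\|_{L^6(\Lambda)}^6 \leq C \, \|u\|_{H^1(\Lambda)}^2 \, \|u\|_{L^2(\Lambda)}^4 .
\end{equation*}

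Applying this pointwise in $(t,\omega)$ to $u = u^g_t$, pulling the $L^2$-factor out of the time integral as a supremum, and taking expectations, I obtain
\begin{equation*}
\mathbb{E}\left[ \int_0^T \|u^g_t\|_{L^6(\Lambda)}^6 \,\mathrm{d}t \right]
\leq C \, \mathbb{E}\left[ \Bigl( \sup_{t\in[0,T]} \|u^g_t\|_{L^2(\Lambda)}^4 \Bigr) \int_0^T \|u^g_t\|_{H^1(\Lambda)}^2 \,\mathrm{d}t \right].
\end{equation*}
To the right-hand side I would apply H\"older's inequality in $\omega$ with conjugate exponents $\tfrac32$ and $3$, converting the product into
\begin{equation*}
C \, \Bigl( \mathbb{E}\bigl[ \sup_{t\in[0,T]} \|u^g_t\|_{L^2(\Lambda)}^6 \bigr] \Bigr)^{2/3}
\Bigl( \mathbb{E}\bigl[ \bigl( \textstyle\int_0^T \|u^g_t\|_{H^1(\Lambda)}^2 \,\mathrm{d}t \bigr)^3 \bigr] \Bigr)^{1/3},
\end{equation*}
using that $(\sup_t\|u^g_t\|_{L^2(\Lambda)}^4)^{3/2} = \sup_t\|u^g_t\|_{L^2(\Lambda)}^6$. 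Both factors are finite because $u^g \in E$, and are in fact bounded in terms of $g$ by Proposition \ref{aprioribound}; this gives the claim.

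The computation is essentially routine once the interpolation is in place, so the only point requiring care is the first step: choosing the Gagliardo--Nirenberg exponent $\theta = 1/3$ correctly, so that the sixth power splits as a square of the $H^1$-norm times a fourth power of the $L^2$-norm, and then matching those powers to the H\"older exponents $\tfrac32$ and $3$ so that they align exactly with the two moments furnished by Proposition \ref{aprioribound}. Any other split would fail to close against the available bounds.
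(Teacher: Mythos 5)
Your proof is correct and follows essentially the same route as the paper: the Gagliardo--Nirenberg interpolation $\|u\|_{L^6(\Lambda)}^6 \leq C\,\|u\|_{H^1(\Lambda)}^2\,\|u\|_{L^2(\Lambda)}^4$, pulling the $L^2$-factor out as a supremum in time, and closing with H\"older in $\omega$ against the two moments of Proposition \ref{aprioribound}. In fact your version is slightly more careful than the paper's final display, which omits the H\"older exponents $2/3$ and $1/3$ on the two expectations that you correctly include.
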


\begin{proof}
	We apply the Gagliardo-Nirenberg interpolation inequality which can be found in \cite{roubicek2013}. This yields for almost all $(t,\omega)\in [0,T]\times\Omega$
	\begin{equation}
		\left \| u^g_t \right \|_{L^6(\Lambda)}^6 \leq C \left \| u^g_t \right \|_{H^1(\Lambda)}^2 \left \| u^g_t \right \|_{L^2(\Lambda)}^4.
	\end{equation}
	Integrating over $[0,T]\times\Omega$ yields
	\begin{equation}
		\begin{split}
			\mathbb{E} \left [ \int_0^T \left \| u^g_t \right \|_{L^6(\Lambda)}^6 \mathrm{d}t \right ] &\leq \mathbb{E} \left [ \int_0^T \left \| u^g_t \right \|_{H^1(\Lambda)}^2 \left \| u^g_t \right \|_{L^2(\Lambda)}^4 \mathrm{d}t \right ]\\
			&\leq \mathbb{E} \left [ \sup_{t\in[0,T]} \left \| u^g_t \right \|_{L^2(\Lambda)}^4 \int_0^T \left \| u^g_t \right \|_{H^1(\Lambda)}^2 \mathrm{d}t \right ]\\
			&\leq \mathbb{E} \left [ \sup_{t\in [0,T]} \left \| u^g_t \right \|_{L^2(\Lambda)}^6 \right ] \mathbb{E} \left [ \left ( \int_0^T \left \| u^g_t \right \|_{H^1(\Lambda)}^2 \mathrm{d}t \right )^3 \right ]\\
			&<\infty,
		\end{split}
	\end{equation}
	where we used H\"older's inequality and Lemma \ref{aprioribound}.
\end{proof}

Next, we show that the solution map of the state equation \eqref{stateequation} is globally Lipschitz continuous.

\begin{proposition}\label{lipschitzcontinuity}
	Let $E$ be defined as in \eqref{solutionspace}. For the solution map
	\begin{align}
		L^{2}\left ([0,T] \times \Lambda\right ) &\to E\\
		g &\mapsto u^g,
	\end{align}
	there exists a constant $C=C(f,b,\sigma,Q,\Lambda,T)\in\mathbb{R}$ such that
	\begin{equation}\label{lipschitz}
		\left \| u^{g_1}_t - u^{g_2}_t \right \|_E^2 \leq C \int_0^T \left \| g_1-g_2 \right \|_{L^2(\Lambda)}^2\mathrm{d}s
	\end{equation}
	In particular, the solution map is Lipschitz continuous from $L^{2}\left ([0,T] \times \Lambda\right )$ to $E$.
\end{proposition}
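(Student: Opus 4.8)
The plan is to mirror the sixth-moment computation of Proposition \ref{aprioribound}, but applied to the \emph{difference} of two solutions. Write $w := u^{g_1} - u^{g_2}$. Since the initial datum $u^0$ does not depend on the control, $w_0 = 0$, and $w$ solves the variational equation
\begin{align*}
\mathrm{d}w_t = \big[\Delta w_t + f(u^{g_1}_t) - f(u^{g_2}_t) + b(t)(g_1(t)-g_2(t))\big]\mathrm{d}t + \big(\sigma(t,u^{g_1}_t)-\sigma(t,u^{g_2}_t)\big)\mathrm{d}W^Q_t.
\end{align*}
Applying the It\^o formula of \cite{liu2015}, Theorem 4.2.5, to $\|w_t\|_{L^2(\Lambda)}^2$ produces the same five terms as in \eqref{ito}: the coercivity term ${}_{(H^1(\Lambda))^\ast}\langle \Delta w_s, w_s\rangle_{H^1(\Lambda)} = -\|\nabla w_s\|_{L^2(\Lambda)}^2 \le 0$, the drift term involving $f$, the control term, the trace term, and a stochastic integral.

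For the nonlinear term I would use the one-sided Lipschitz estimate $(f(u)-f(v))(u-v)\le \widetilde{\text{Lip}}_f(u-v)^2$ to get $\langle f(u^{g_1}_s)-f(u^{g_2}_s), w_s\rangle_{L^2(\Lambda)} \le \widetilde{\text{Lip}}_f \|w_s\|_{L^2(\Lambda)}^2$; for the trace term the Lipschitz assumption \eqref{lipschitzsigma} gives $\|(\sigma(s,u^{g_1}_s)-\sigma(s,u^{g_2}_s))\circ\sqrt{Q}\|_{\text{HS}(L^2(\Lambda))}^2 \le C\|w_s\|_{L^2(\Lambda)}^2$; and for the control term Young's inequality yields $2\langle b(s)(g_1-g_2), w_s\rangle_{L^2(\Lambda)} \le \|b\|_{L^\infty}\big(\|w_s\|_{L^2(\Lambda)}^2 + \|g_1(s)-g_2(s)\|_{L^2(\Lambda)}^2\big)$. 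Dropping the nonpositive gradient term then gives a pointwise-in-$t$ inequality of exactly the form of the second half of \eqref{ito}, with the control difference now playing the role of the inhomogeneity (and, crucially, with no additive constant, since $w_0=0$).

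From here the argument is the sixth-moment estimate run verbatim: raise to the power $3$, take $\sup_{t\in[0,T]}$ and then expectations, and control the stochastic term by Burkholder--Davis--Gundy. The only structural change is that the bracket is bounded, via \eqref{lipschitzsigma}, by $\int_0^T \|w_s\|_{L^2(\Lambda)}^4\,\mathrm{d}s \le \sup_{s}\|w_s\|_{L^2(\Lambda)}^2 \int_0^T \|w_s\|_{L^2(\Lambda)}^2\,\mathrm{d}s$, so that after raising to the power $3/2$ and applying Young's inequality one can absorb a small multiple of $\mathbb{E}\sup_t\|w_t\|_{L^2(\Lambda)}^6$ into the left-hand side. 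This absorption step is the main technical point to get right; it relies on the a priori finiteness from Proposition \ref{aprioribound} (which guarantees $u^{g_1},u^{g_2}\in E$, hence that the martingale and its bracket have finite moments) to justify rearranging the inequality. Gronwall's inequality then yields
\begin{align*}
\mathbb{E}\Big[\sup_{t\in[0,T]}\|w_t\|_{L^2(\Lambda)}^6\Big] \le C\Big(\int_0^T \|g_1(s)-g_2(s)\|_{L^2(\Lambda)}^2\,\mathrm{d}s\Big)^3,
\end{align*}
whose cube root is precisely the $L^6(\Omega;C([0,T];L^2(\Lambda)))$ part of the $E$-norm, squared.

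It remains to recover the $L^2([0,T]\times\Omega;H^1(\Lambda))$ part. For this I would return to the It\^o identity, this time keeping the coercivity term $2\int_0^t\|\nabla w_s\|_{L^2(\Lambda)}^2\,\mathrm{d}s$ on the left and taking expectations, so that the stochastic integral drops out; this bounds $\mathbb{E}\int_0^T\|\nabla w_t\|_{L^2(\Lambda)}^2\,\mathrm{d}t$ by the control difference plus $C\,\mathbb{E}\int_0^T\|w_t\|_{L^2(\Lambda)}^2\,\mathrm{d}t$. Inserting the moment bound just obtained (through $\mathbb{E}\int_0^T\|w_t\|_{L^2(\Lambda)}^2\,\mathrm{d}t \le T\,(\mathbb{E}\sup_t\|w_t\|_{L^2(\Lambda)}^6)^{1/3}$) controls both $\|w\|_{L^2}$ and $\|\nabla w\|_{L^2}$ contributions to $\|w\|_{H^1}$ by $C\int_0^T\|g_1-g_2\|_{L^2(\Lambda)}^2\,\mathrm{d}s$. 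Combining the two pieces according to the definition of the norm on $E$ in \eqref{solutionspace} yields \eqref{lipschitz}, and global Lipschitz continuity is immediate. I expect every delicate point to be confined to the martingale-absorption bookkeeping described above, since the remainder is a direct transcription of Proposition \ref{aprioribound} with the growth bounds replaced by the genuine Lipschitz bound \eqref{lipschitzsigma} and the one-sided Lipschitz estimate on $f$.
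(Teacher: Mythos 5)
Your proposal is correct and takes essentially the same route as the paper, whose proof applies the It\^o formula to $\|u^{g_1}_t-u^{g_2}_t\|_{L^2(\Lambda)}^2$, uses the one-sided Lipschitz bound on $f$ and the Lipschitz condition \eqref{lipschitzsigma} on $\sigma$, and then explicitly defers to ``similar arguments as in the proof of Proposition \ref{aprioribound}'' --- precisely the sixth-moment, Burkholder--Davis--Gundy and Gronwall machinery you spell out, including the separate treatment of the gradient term for the $H^1$ part. Your only (immaterial) deviation is the $\varepsilon$-absorption of the stochastic term into $\mathbb{E}\bigl[\sup_t\|w_t\|_{L^2(\Lambda)}^6\bigr]$, where the quadratic-variation bound $\int_0^T\|w_s\|_{L^2(\Lambda)}^4\,\mathrm{d}s$ could equally be handled by H\"older as in Proposition \ref{aprioribound}, avoiding the rearrangement (and hence the a priori finiteness) you correctly flag as the delicate point.
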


\begin{proof}
	By It\^{o}'s formula (see \cite{liu2015}, Theorem 4.2.5), we have almost surely
	\begin{equation}
		\begin{split}
			\left \| u^{g_1}_t - u^{g_2}_t \right \|_{L^2(\Lambda)}^2 =2 &\int_0^t { }_{(H^1(\Lambda))^{\ast}} \left \langle \Delta \left ( u^{g_1} - u^{g_2} \right ),  u^{g_1} - u^{g_2}  \right \rangle_{H^1(\Lambda)} \mathrm{d}s\\
			&+ 2\int_0^t \left \langle f\left ( u^{g_1} \right ) - f \left (u^{g_2}\right ) ,u^{g_1} - u^{g_2} \right \rangle_{L^2(\Lambda)} \mathrm{d}s\\
			&+ 2\int_0^t \left \langle b \left (g_1 - g_2 \right ), u^{g_1} - u^{g_2} \right \rangle_{L^2(\Lambda)} \mathrm{d}s\\
			&+ \int_0^t \|(\sigma(s,u^{g_1}_s) - \sigma(s,u^{g_2}_s))\circ \sqrt{Q} \|_{\text{HS}(L^2(\Lambda))}^2 \mathrm{d}s\\
			&+ 2\int_0^t \langle u^{g_1}_s - u^{g_2}_s , (\sigma(s,u^{g_1}_s) - \sigma(s,u^{g_2}_s)) \mathrm{d}W^Q_s \rangle_{L^2(\Lambda)}.
		\end{split}
	\end{equation}
	Using the Lipschitz condition \eqref{lipschitzsigma} and similar arguments as in the proof of Lemma \ref{aprioribound} yields the claim.
\end{proof}

Now we want to prove the existence of an optimal control:

\begin{theorem}\label{atleastonesolution}
	There is at least one optimal solution $g^{\ast}\in G_{\text{ad}}$ such that
	\begin{equation}
		J(g^{\ast}) = \inf_{g\in G_{\text{ad}}} J(g).
	\end{equation}
\end{theorem}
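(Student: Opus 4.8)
The plan is to apply the direct method of the calculus of variations. First I would choose a minimizing sequence $(g_n)_{n\in\mathbb{N}}\subset G_{\text{ad}}$ with $J(g_n)\to\inf_{g\in G_{\text{ad}}}J(g)$; since $0\in G_{\text{ad}}$ and $J\ge 0$, this infimum is finite. As $G_{\text{ad}}$ is a bounded, convex and closed subset of the reflexive space $L^6([0,T]\times\Lambda)$, it is weakly sequentially compact, so after passing to a subsequence (not relabeled) we have $g_n\rightharpoonup g^{\ast}$ weakly in $L^6([0,T]\times\Lambda)$ with $g^{\ast}\in G_{\text{ad}}$, using weak closedness of the convex closed set $G_{\text{ad}}$ (e.g.\ via Mazur's lemma).

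Next I would control the associated states. By Proposition~\ref{aprioribound} together with Corollary~\ref{nirenberg}, the bound $\|g_n\|_{L^6}\le\kappa$ makes the sequence $(u^{g_n})$ bounded in $E$ and in $L^6(\Omega\times[0,T]\times\Lambda)$, uniformly in $n$. Hence, along a further subsequence, $u^{g_n}\rightharpoonup\bar u$ weakly in $L^2([0,T]\times\Omega;H^1(\Lambda))$, $\Delta u^{g_n}\rightharpoonup\Delta\bar u$ weakly in $L^2([0,T]\times\Omega;(H^1(\Lambda))^{\ast})$, and, using the growth bound \eqref{growthcondition} and the uniform $L^6$-bound, $f(u^{g_n})\rightharpoonup\overline f$ weakly in $L^2([0,T]\times\Omega\times\Lambda)$ for some $\overline f$; similarly $\sigma(\cdot,u^{g_n})\circ\sqrt{Q}$ converges weakly to some $\overline\sigma$. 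Passing to the limit in the tested, time-integrated variational formulation of \eqref{stateequation} — the control term converges because $b(g_n-g^{\ast})\rightharpoonup 0$ while the test functions are fixed — shows that $\bar u$ solves $\mathrm{d}\bar u=[\Delta\bar u+\overline f+bg^{\ast}]\,\mathrm{d}t+\overline\sigma\,\mathrm{d}W^Q_t$ with $\bar u_0=u^0$.

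The main obstacle is the identification $\overline f=f(\bar u)$ and $\overline\sigma=\sigma(\cdot,\bar u)$, i.e.\ passing to the limit in the nonlinear terms, for which weak convergence of $(u^{g_n})$ alone does not suffice. I would resolve this along the lines of the variational existence theory in \cite{liu2015}: since the one-sided Lipschitz bound \eqref{onesidedlipschitzcondition} renders $v\mapsto\Delta v+f(v)$ monotone and hemicontinuous after a shift, a Minty--Browder type argument combined with the energy identity (It\^o's formula for $\|u^{g_n}_t\|^2_{L^2(\Lambda)}$ and $\|\bar u_t\|^2_{L^2(\Lambda)}$) identifies $\overline f=f(\bar u)$, and the Lipschitz bound \eqref{lipschitzsigma} together with the resulting strong convergence then yields $\overline\sigma=\sigma(\cdot,\bar u)$; alternatively one first obtains strong convergence $u^{g_n}\to\bar u$ in $L^2(\Omega\times[0,T]\times\Lambda)$ by a stochastic compactness (tightness and Skorokhod) argument. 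Either way $\bar u$ solves \eqref{stateequation} for the control $g^{\ast}$, and the uniqueness contained in Proposition~\ref{lipschitzcontinuity} forces $\bar u=u^{g^{\ast}}$.

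Finally I would conclude by lower semicontinuity. The functional $I_1$ in \eqref{i1} is convex and strongly continuous on $L^2(\Omega\times[0,T];L^2(\Lambda))\times L^2(\Omega;L^2(\Lambda))$, hence weakly sequentially lower semicontinuous, so that $I_1(u^{g^{\ast}})\le\liminf_n I_1(u^{g_n})$, using in particular $u^{g_n}_T\rightharpoonup u^{g^{\ast}}_T$ weakly in $L^2(\Omega;L^2(\Lambda))$; likewise the convex, continuous $I_2$ in \eqref{i2} satisfies $I_2(g^{\ast})\le\liminf_n I_2(g_n)$. Combining these with the superadditivity of $\liminf$ gives $J(g^{\ast})=I_1(u^{g^{\ast}})+I_2(g^{\ast})\le\liminf_n\big(I_1(u^{g_n})+I_2(g_n)\big)=\inf_{g\in G_{\text{ad}}}J(g)$, and since $g^{\ast}\in G_{\text{ad}}$ this forces equality, so $g^{\ast}$ is an optimal control. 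I expect the identification of the nonlinear limits to be the only genuinely delicate step, the compactness and lower-semicontinuity parts being routine given Propositions~\ref{aprioribound} and~\ref{lipschitzcontinuity}.
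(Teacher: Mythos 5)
Your proposal is correct in outline, but it takes a genuinely different route from the paper. The paper does \emph{not} attempt to identify the nonlinear limits by monotonicity on the original probability space; instead it uses the compactness method of \cite{flandoli1995}: tightness of the laws $\mathbb{P}\circ (u^{g_n})^{-1}$ on $L^2([0,T]\times\Lambda)$ (from the a priori bound of Proposition \ref{aprioribound}), the Skorohod embedding theorem to produce copies $\tilde u^{g_n}\to\tilde u^{g^{\ast}}$ converging almost surely on a \emph{new} probability space, the martingale representation theorem to identify the limit as a solution of \eqref{stateequation} with control $g^{\ast}$, and finally Fatou's lemma combined with uniqueness in law to transfer $\liminf_n I_1(u^{g_n})\geq I_1(u^{g^{\ast}})$ back to the original space. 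Your main route --- weak convergence of $u^{g_n}$, $f(u^{g_n})$ and $\sigma(\cdot,u^{g_n})\circ\sqrt{Q}$, a Minty--Browder identification exploiting that $v\mapsto\Delta v+f(v)$ is monotone after a shift by \eqref{onesidedlipschitzcondition} together with the Lipschitz bound \eqref{lipschitzsigma}, then pathwise uniqueness and weak lower semicontinuity of the convex functionals $I_1$, $I_2$ --- is viable: it is precisely the identification mechanism behind the variational existence theorem the paper invokes (\cite{liu2015}, Example 5.1.8), applied to the sequence $u^{g_n}$ in place of Galerkin approximations, the linearity of the control term making $bg_n\rightharpoonup bg^{\ast}$ harmless. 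What your route buys is a proof entirely on the original probability space, with no Skorohod embedding, no martingale representation, and no appeal to uniqueness in law (only pathwise uniqueness via Proposition \ref{lipschitzcontinuity}); what the paper's route buys is robustness, since the compactness method uses no monotone structure of the drift at all and the convexity of $I_1$ enters only through Fatou.

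Two points in your sketch deserve care. First, $u^{g_n}_T\rightharpoonup u^{g^{\ast}}_T$ in $L^2(\Omega;L^2(\Lambda))$ is not automatic from weak convergence in $L^2([0,T]\times\Omega;H^1(\Lambda))$: you must extract a weak limit of the terminal values (bounded by Proposition \ref{aprioribound}) and identify it with $\bar u_T$ by passing to the limit in the equation evaluated at $T$ --- an identity you in any case need inside the energy/Minty step --- and you should also note that adaptedness survives weak limits because the adapted processes form a strongly closed subspace. Second, your fallback is misstated: the Skorohod argument does \emph{not} yield strong convergence of $u^{g_n}$ in $L^2(\Omega\times[0,T]\times\Lambda)$ on the original space, only almost sure convergence of equal-in-law copies on a new space; this is exactly why the paper must invoke uniqueness in law and express $I_1$ through the laws, and why in that route one cannot simply conclude $\bar u=u^{g^{\ast}}$ from Proposition \ref{lipschitzcontinuity}. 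Relatedly, in the Minty step the identification of $\overline\sigma$ comes \emph{jointly} with $\overline f$ through the combined inequality $2\langle A(u)-A(v),u-v\rangle+\|(\sigma(\cdot,u)-\sigma(\cdot,v))\circ\sqrt{Q}\|^2_{\text{HS}(L^2(\Lambda))}\leq c\|u-v\|^2_{L^2(\Lambda)}$; no prior strong convergence is needed, and phrasing it as a consequence of ``resulting strong convergence'' suggests a circularity the standard trick avoids.
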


\begin{proof}
	First, we notice that $J$ is nonnegative and hence bounded from below. Let $\left (g_n\right )_{n\in\mathbb{N}} \subset G_{\text{ad}}$ be a minimizing sequence, i.e.
	\begin{equation}
		\lim_{n\to\infty} J(g_n) = \inf_{g\in G_{\text{ad}}} J(g),
	\end{equation}
	and let $u^{g_n} \in E$ denote the unique solution of the state equation \eqref{stateequation} associated with $g_n$ on the right hand side.
	
	Since $\left ( g_n \right )_{n\in\mathbb{N}} \subset G_{\text{ad}}$, $\left ( g_n \right )_{n\in\mathbb{N}}$ is in particular bounded in $L^2\left ( [0,T]\times \Lambda \right )$. Hence, we can extract a weakly convergent subsequence - again denoted by $g_n$ - such that $g_n \rightharpoonup g^{\ast}$ in $L^2([0,T]\times \Lambda )$. The point is now to show that $g^{\ast} \in G_{\text{ad}}$, and $g^{\ast}$ minimizes $J$ in $G_{\text{ad}}$.
	
	Since $G_{\text{ad}}$ is convex and strongly closed, it follows that $G_{\text{ad}}$ is also weakly closed, hence $g^{\ast} \in G_{\text{ad}}$.
	
	In order to show that $g^{\ast}$ minimizes $J$, we first show that $u^{g_n}$ converges strongly to $u^{g^{\ast}}$. In the deterministic case, the a priori bound in Lemma \ref{aprioribound} holds pathwise and we can apply a compact embedding theorem in order to show strong convergence of the solutions. Since we only have the a priori bound under the expectation, we cannot use the same technique. Instead we apply the so called compactness method introduced in \cite{flandoli1995}. Let us sketch this technique here:
	
	From the bound
	\begin{equation}
		\sup_{n\in\mathbb{N}} \mathbb{E} \left [ \sup_{t\in [0,T]} \left \| u^{g_n}_t \right \|^2_{L^2(\Lambda)} + \int_0^T \left \| u^{g_n}_s \right \|_{H^1(\Lambda)}^2\mathrm{d}s \right ] < \infty
	\end{equation}
	we can conclude tightness of the measures $\mathbb{P}^n := \mathbb{P} \circ (u^{g_n})^{-1}$ on $L^2 ([0,T]\times \Lambda)$. Therefore, $(\mathbb{P}^n)_{n\in\mathbb{N}}$ is relatively compact and we can extract a converging subsequence $\mathbb{P}^n \to \mathbb{P}^{\ast}$. It remains to identify the limit $\mathbb{P}^{\ast}$. By the Skorohod embedding theorem there exists a probability space $(\tilde{\Omega}, \tilde{\mathcal{F}},\tilde{\mathbb{P}})$ and a sequence of random variables $(\tilde{u}^{g_n})_{n\in\mathbb{N}}$ and $\tilde{u}^{g^{\ast}}$ defined on $\tilde{\Omega}$ with the same law as $(u^{g_n})_{n\in\mathbb{N}}$ and $u^{g^{\ast}}$, respectively, such that $\tilde{u}^{g_n}\to \tilde{u}^{g^{\ast}}$ strongly in $L^2([0,T]\times\Lambda)$ $\tilde{\mathbb{P}}$-almost surely. Therefore, using the martingale representation theorem, we can identify $\tilde{u}^{g^{\ast}}$ as a solution to our state equation associated with $g^{\ast}$ on the right hand side (see \cite{daprato2014}, Section 8.4 for details).
	
	Now, we split the cost functional into one part that depends on $u^g$ and into one part that depends on $g$. For the first part, $I_1$, we have
	
	\begin{equation}
		\begin{split}
			&\lim_{n\to\infty} I_1(u^{g_n})\\
			=& \lim_{n\to\infty} \mathbb{E} \left [ \frac{c_{\overline{\Lambda}}}{2} \int_0^T \int_{\Lambda} \left ( u^{g_n}_t(x) - u_{\overline{\Lambda}}(t,x) \right )^2 \mathrm{d}x \mathrm{d}t + \frac{c_T}{2} \int_{\Lambda} \left ( u^{g_n}_T(x) - u^T(x) \right )^2 \mathrm{d}x \right ]\\
			=& \lim_{n\to\infty} \tilde{\mathbb{E}} \left [ \frac{c_{\overline{\Lambda}}}{2} \int_0^T \int_{\Lambda} \left ( \tilde{u}^{g_n}_t(x) - u_{\overline{\Lambda}}(t,x) \right )^2 \mathrm{d}x \mathrm{d}t + \frac{c_T}{2} \int_{\Lambda} \left ( \tilde{u}^{g_n}_T(x) - u^T(x) \right )^2 \mathrm{d}x \right ]\\
			\geq& \tilde{\mathbb{E}} \left [ \liminf_{n\to\infty} \left (\frac{c_{\overline{\Lambda}}}{2} \int_0^T \int_{\Lambda} \left ( \tilde{u}^{g_n}_t(x) - u_{\overline{\Lambda}}(t,x) \right )^2 \mathrm{d}x \mathrm{d}t + \frac{c_T}{2} \int_{\Lambda} \left ( \tilde{u}^{g_n}_T(x) - u^T(x) \right )^2 \mathrm{d}x \right ) \right ]\\
			=& \tilde{\mathbb{E}} \left [ \frac{c_{\overline{\Lambda}}}{2} \int_0^T \int_{\Lambda} \left ( \tilde{u}^{g^{\ast}}_t(x) - u_{\overline{\Lambda}}(t,x) \right )^2 \mathrm{d}x \mathrm{d}t + \frac{c_T}{2} \int_{\Lambda} \left ( \tilde{u}^{g^{\ast}}_T(x) - u^T(x) \right )^2 \mathrm{d}x \right ]\\
			=& \mathbb{E} \left [ \frac{c_{\overline{\Lambda}}}{2} \int_0^T \int_{\Lambda} \left ( u^{g^{\ast}}_t(x) - u_{\overline{\Lambda}}(t,x) \right )^2 \mathrm{d}x \mathrm{d}t + \frac{c_T}{2} \int_{\Lambda} \left ( u^{g^{\ast}}_T(x) - u^T(x) \right )^2 \mathrm{d}x \right ]\\
			=& I_1(u^{g^{\ast}}),
		\end{split}
	\end{equation}
	where we used Fatou's Lemma, and exploited that uniqueness in law holds for the state equation \eqref{stateequation} and we have a solution in the space $E$ (see equation \eqref{solutionspace}).
	
	Furthermore, since $I_2$ is continuous and convex, it is also weakly lower semicontinuous, i.e.
	\begin{equation}
		g_n \rightharpoonup g^{\ast} \qquad \implies \qquad \liminf_{n\to\infty} I_2(g_n) \geq I_2(g^{\ast}).
	\end{equation}
	Therefore, we have
	\begin{equation}
		\inf_{g\in G_{\text{ad}}}\! J(g) =\! \lim_{n\to\infty} J(g_n)\geq \lim_{n\to\infty} I_1(u^{g_n}) + \liminf_{n\to\infty} I_2(g_n) \geq I_1(u^{g^{\ast}}) + I_2(g^{\ast}) = J(g^{\ast}),
	\end{equation}
	which completes the proof.
\end{proof}

\begin{remark}
	This proof does not rely on the explicit form of our cost functional. The crucial point is, that the cost functional is sequentially weakly lower semicontinuous.
\end{remark}

\section{First Order Condition for Critical Points}\label{firstorder}
In this section, we are first going to derive the G\^ateaux derivative of the solution map and the cost functional and then prove a necessary condition for a control to be locally optimal.

\begin{proposition}\label{gateauxsolutionmap}
	Let $f:\mathbb{R} \to \mathbb{R}$ satisfy the assumptions of Section \ref{wellposed} and $g\in L^6([0,T]\times\Lambda)$ be fixed. Then, for every $h\in L^6([0,T]\times\Lambda)$, the G\^{a}teaux derivative of the solution map $g\mapsto u^g$, $L^6([0,T]\times\Lambda) \to E$ in direction $h$ is given by the solution of the linear SPDE
	\begin{equation}\label{gateaux}
		\begin{cases}
			\mathrm{d}y_t^h = [\Delta y_t^h + f^{\prime}(u_t^g)y_t^h + b(t)h(t) ]\mathrm{d}t + \sigma^{\prime}(t,u_t^g) y^h_t \mathrm{d}W^Q_t&\text{on } L^2(\Lambda)\\
			y^h(0,x)=0&x\in\Lambda.
		\end{cases}
	\end{equation}
\end{proposition}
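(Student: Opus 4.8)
The plan is to establish G\^ateaux differentiability by a direct analysis of the difference quotient. First I would check that the linear SPDE \eqref{gateaux} is itself well-posed in $E$: it is a linear equation whose zeroth-order drift is the (random, adapted) multiplication operator $y\mapsto f^{\prime}(u^g_t)y$ and whose diffusion is $y\mapsto\sigma^{\prime}(t,u^g_t)y$. The one-sided bound \eqref{onesidedlipschitzcondition} yields $\langle f^{\prime}(u^g_s)y,y\rangle_{L^2(\Lambda)}\le\widetilde{\text{Lip}}_f\|y\|^2_{L^2(\Lambda)}$, supplying exactly the monotonicity/coercivity needed, while \eqref{growthcondition}, \eqref{frechetsigma} and the sixth-moment bound of Corollary \ref{nirenberg} give the integrability required by the variational framework of \cite{liu2015}. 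Hence \eqref{gateaux} has a unique solution $y^h\in E$, depending linearly and boundedly on $h$; moreover the Gagliardo--Nirenberg argument of Corollary \ref{nirenberg} applies verbatim to any element of $E$, so $y^h\in L^6(\Omega\times[0,T]\times\Lambda)$. It remains to identify $y^h$ as the directional derivative.

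Fix $h$ and abbreviate $u^\epsilon:=u^{g+\epsilon h}$, $w^\epsilon:=\epsilon^{-1}(u^\epsilon-u^g)$. Subtracting the two copies of \eqref{stateequation} and dividing by $\epsilon$, I would represent the nonlinear increments by the fundamental theorem of calculus, setting
\[
a^\epsilon_s := \int_0^1 f^{\prime}\bigl(u^g_s + r(u^\epsilon_s - u^g_s)\bigr)\,\mathrm{d}r,
\qquad
B^\epsilon_s := \int_0^1 \sigma^{\prime}\bigl(s,\,u^g_s + r(u^\epsilon_s - u^g_s)\bigr)\,\mathrm{d}r,
\]
so that $f(u^\epsilon)-f(u^g)=a^\epsilon(u^\epsilon-u^g)$ and $\sigma(s,u^\epsilon)-\sigma(s,u^g)=B^\epsilon(u^\epsilon-u^g)$. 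The error $z^\epsilon:=w^\epsilon-y^h$ then solves, with $z^\epsilon_0=0$,
\[
\mathrm{d}z^\epsilon_t = \bigl[\Delta z^\epsilon_t + a^\epsilon_t z^\epsilon_t + (a^\epsilon_t - f^{\prime}(u^g_t))y^h_t\bigr]\mathrm{d}t + \bigl[B^\epsilon_t z^\epsilon_t + (B^\epsilon_t - \sigma^{\prime}(t,u^g_t))y^h_t\bigr]\mathrm{d}W^Q_t.
\]
Applying the It\^o formula of \cite{liu2015} to $\|z^\epsilon_t\|^2_{L^2(\Lambda)}$ and arguing as in the proofs of Propositions \ref{aprioribound} and \ref{lipschitzcontinuity}---using $a^\epsilon_s\le\widetilde{\text{Lip}}_f$ from \eqref{onesidedlipschitzcondition}, the bounds \eqref{lipschitzsigma}, \eqref{frechetsigma} to absorb the $B^\epsilon z^\epsilon$ term, Burkholder--Davis--Gundy, and Gronwall's lemma---I would reduce the claim
\[
\mathbb{E}\Bigl[\sup_{t\in[0,T]}\|z^\epsilon_t\|^2_{L^2(\Lambda)}\Bigr] + \mathbb{E}\int_0^T\|\nabla z^\epsilon_t\|^2_{L^2(\Lambda)}\,\mathrm{d}t \longrightarrow 0
\]
to showing that the two remainder terms vanish, namely
\[
\mathbb{E}\int_0^T \bigl\|(a^\epsilon_s - f^{\prime}(u^g_s))y^h_s\bigr\|^2_{L^2(\Lambda)}\,\mathrm{d}s \longrightarrow 0,
\qquad
\mathbb{E}\int_0^T \bigl\|(B^\epsilon_s - \sigma^{\prime}(s,u^g_s))y^h_s \circ \sqrt{Q}\bigr\|^2_{\text{HS}}\,\mathrm{d}s \longrightarrow 0
\]
as $\epsilon\to0$.

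This last convergence is the main obstacle, and it is exactly where the polynomial growth \eqref{growthcondition} of $f^{\prime}$ forces the use of the sixth-moment bounds flagged in the remark preceding the statement. By Proposition \ref{lipschitzcontinuity}, $\|u^\epsilon-u^g\|_E\le C\epsilon\|h\|_{L^2}$, so $u^\epsilon\to u^g$ in $E\hookrightarrow L^6(\Omega\times[0,T]\times\Lambda)$; passing to a subsequence I obtain both pointwise a.e.\ convergence $u^\epsilon\to u^g$ and an $L^6$-dominating function $G$ with $|u^\epsilon|\le G$. Continuity of $f^{\prime}$ and Fr\'echet differentiability of $\sigma$ then give $a^\epsilon_s\to f^{\prime}(u^g_s)$ and $B^\epsilon_s\to\sigma^{\prime}(s,u^g_s)$ pointwise. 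For the drift remainder I would estimate $|a^\epsilon_s-f^{\prime}(u^g_s)|\le C(1+|u^g_s|^2+G^2)$ via \eqref{growthcondition} and apply H\"older in the spatial variable,
\[
\int_\Lambda |a^\epsilon_s-f^{\prime}(u^g_s)|^2|y^h_s|^2\,\mathrm{d}x \le \|a^\epsilon_s-f^{\prime}(u^g_s)\|^2_{L^3(\Lambda)}\,\|y^h_s\|^2_{L^6(\Lambda)} \le C\bigl(1+\|u^g_s\|^2_{L^6(\Lambda)}+\|G\|^2_{L^6(\Lambda)}\bigr)^2\|y^h_s\|^2_{L^6(\Lambda)},
\]
which furnishes an $\epsilon$-independent, integrable majorant once $u^g, G, y^h\in L^6(\Omega\times[0,T]\times\Lambda)$ are paired by a further H\"older inequality in $(s,\omega)$. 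Dominated convergence then yields the first limit along the subsequence, and the standard subsequence principle upgrades it to the full family. The diffusion remainder is simpler: \eqref{frechetsigma} gives the uniform bound $\|(B^\epsilon_s-\sigma^{\prime}(s,u^g_s))y^h_s\circ\sqrt{Q}\|^2_{\text{HS}}\le C\|y^h_s\|^2_{L^2(\Lambda)}$, so dominated convergence applies with the fixed majorant $C\|y^h\|^2_{L^2(\Lambda)}$. This establishes $z^\epsilon\to0$ in the natural norm above, hence that $y^h$ is the G\^ateaux derivative of $g\mapsto u^g$ in direction $h$.
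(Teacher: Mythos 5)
Your overall architecture --- the error process $z^\epsilon$, It\^o's formula for $\|z^\epsilon_t\|^2_{L^2(\Lambda)}$, absorption of the zeroth-order drift via $a^\epsilon_s\le\widetilde{\text{Lip}}_f$, Burkholder--Davis--Gundy plus Gronwall, and then dominated convergence of the remainder terms using the $L^6(\Omega\times[0,T]\times\Lambda)$ integrability from Corollary \ref{nirenberg} --- coincides with the paper's proof. The difference lies in how you decompose the nonlinear increments, and there your argument has a genuine gap in the diffusion term. You write $\sigma(s,u^\epsilon_s)-\sigma(s,u^g_s)=B^\epsilon_s(u^\epsilon_s-u^g_s)$ with $B^\epsilon_s=\int_0^1\sigma^{\prime}(s,u^g_s+r(u^\epsilon_s-u^g_s))\,\mathrm{d}r$ and then claim that ``Fr\'echet differentiability of $\sigma$'' yields $B^\epsilon_s\to\sigma^{\prime}(s,u^g_s)$ pointwise. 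This does not follow from the paper's hypotheses: Section \ref{wellposed} assumes only that $\sigma(t,\cdot)$ is Fr\'echet differentiable at every point together with the uniform bound \eqref{frechetsigma}; it does \emph{not} assume that $u\mapsto\sigma^{\prime}(t,u)$ is continuous, and pointwise differentiability does not imply convergence of the derivatives evaluated along the segment as $u^\epsilon_s\to u^g_s$. (Even the Bochner-integral mean-value identity for $\sigma$ needs a measurability/absolute-continuity justification, though that part is repairable since the Hilbert--Schmidt operators form a Hilbert space.) For the drift your route is sound, because the paper does assume $f\in C^1$, so $a^\epsilon\to f^{\prime}(u^g)$ a.e.\ along the a.e.-convergent subsequence, and your $L^3$--$L^6$ H\"older domination is precisely the role that the growth bound \eqref{growthcondition} and the sixth moments play in the paper's estimate $|R_\delta|\le C(1+|u^g|^3+|y^h|^3)$.

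The paper avoids your gap with a different splitting: it inserts the intermediate point $u^g+\delta y^h$, writing the drift remainder as $R_\delta=\delta^{-1}\bigl(f(u^g+\delta y^h)-f(u^g)\bigr)-f^{\prime}(u^g)y^h$ plus $S_\delta=\delta^{-1}\bigl(f(u^{g+\delta h})-f(u^g+\delta y^h)\bigr)$, and analogously $\Xi_\delta$, $\Sigma_\delta$ for $\sigma$. The point is that $\Xi_\delta$ is a difference quotient at the \emph{fixed} base point $u^g_s$ in the \emph{fixed} direction $y^h_s$, so it tends to zero by the very definition of the Fr\'echet derivative --- no continuity of $\sigma^{\prime}$ is needed --- while $\Sigma_\delta$ involves the increment between $u^{g+\delta h}_s$ and $u^g_s+\delta y^h_s$, which equals exactly $\delta z_\delta(s)$, so the Lipschitz condition \eqref{lipschitzsigma} bounds it by $C\|z_\delta(s)\|^2_{L^2(\Lambda)}$ and it is absorbed into the Gronwall step (for $f$, the corresponding piece $S_\delta$ is absorbed the same way using the one-sided Lipschitz bound). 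If you re-split your $\sigma$-remainder in this fashion, your proof closes under the paper's stated hypotheses; as written, it requires the additional, unstated assumption that $\sigma^{\prime}(t,\cdot)$ is continuous.
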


\begin{proof}
	The idea for this proof is taken from \cite{marinelli2020}, Theorem 4.4; see also \cite{bensoussan1983}, where the same technique is applied in a similar setting. Let $y^h$ denote the solution of equation \eqref{gateaux} associated with $h$ on the right hand side. Set
	\begin{equation}
		z_{\delta}(t) := \frac{u_t^{g+\delta h} - u_t^g}{\delta} - y_t^h.
	\end{equation}
	We want to show that $z_{\delta}\to 0$ in $L^2(\Omega\times[0,T];H^1(\Lambda)) \cap L^2(\Omega;C([0,T];L^2(\Lambda)))$ as $\delta \to 0$. First notice
	\begin{equation}
		\begin{split}
			z_{\delta}(t) =& \int_0^t \Delta z_{\delta}(s) + \frac{1}{\delta} \left ( f( u_s^{g+\delta h} ) - f(u_s^g) \right ) - f^{\prime} (u_s^g) y_s^h \mathrm{d}s\\
			&+ \int_0^t \frac{1}{\delta} \left ( \sigma(s,u^{g+\delta h}_s ) - \sigma(s,u^g_s) \right ) - \sigma^{\prime}(s,u^g_s) y^h_s \mathrm{d}W_s^Q.\label{z}
		\end{split}
	\end{equation}
	Note that
	\begin{equation}
		\begin{split}
			&\frac{1}{\delta} \left ( f( u_s^{g+\delta h} ) - f(u_s^g) \right ) - f^{\prime} (u_s^g) y_s^h\\
			=& \underbrace{\frac{1}{\delta} \left ( f(u_s^g+\delta y_s^h) - f(u_s^g) \right ) - f^{\prime} (u_s^g) y_s^h}_{=: R_{\delta}(s)} + \underbrace{\frac{1}{\delta} \left ( f( u_s^{g+\delta h} ) - f(u_s^g+\delta y_s^h) \right )}_{=: S_{\delta}(s)}.
		\end{split}
	\end{equation}
	and similarly
	\begin{equation}
		\begin{split}
			&\frac{1}{\delta} \left ( \sigma(s, u_s^{g+\delta h} ) - \sigma(s,u_s^g) \right ) - \sigma^{\prime} (s,u_s^g) y_s^h\\
			=& \underbrace{\frac{1}{\delta} \left ( \sigma(s,u_s^g+\delta y_s^h) - \sigma(s,u_s^g) \right ) - \sigma^{\prime} (s,u_s^g) y_s^h}_{=: \Xi_{\delta}(s)} + \underbrace{\frac{1}{\delta} \left ( \sigma( s,u_s^{g+\delta h} ) - \sigma(s,u_s^g+\delta y_s^h) \right )}_{=: \Sigma_{\delta}(s)}.
		\end{split}
	\end{equation}
	Together with equation \eqref{z}, It\^{o}'s formula (see \cite{liu2015}, Theorem 4.2.5) yields
	\begin{equation}
		\begin{split}
			\frac12 \left \| z_{\delta}(t) \right \|_{L^2(\Lambda)}^2
			=& \int_0^t\! {}_{(H^1(\Lambda))^{\ast}} \left \langle \Delta z_{\delta}(s), z_{\delta}(s) \right \rangle_{H^1(\Lambda)} \mathrm{d}s + \int_0^t \!\left \langle R_{\delta}(s),z_{\delta}(s) \right \rangle_{L^2(\Lambda)} \mathrm{d}s\\
			&+ \int_0^t\! \left \langle S_{\delta}(s),z_{\delta}(s) \right \rangle_{L^2(\Lambda)} \mathrm{d}s + \int_0^t \!\left \langle z_{\delta}(s), \Xi_{\delta}(s) \mathrm{d}W^Q_s \right \rangle_{L^2(\Lambda)}\\
			&+ \int_0^t\! \left \langle z_{\delta}(s), \Sigma_{\delta}(s) \mathrm{d}W^Q_s \right \rangle_{L^2(\Lambda)}\\
			&+ \frac12 \int_0^t \| \left ( \Xi_{\delta}(s) + \Sigma_{\delta} (s) \right ) \circ \sqrt{Q} \|_{\text{HS}(L^2(\Lambda))}^2 \mathrm{d}s.\label{zdelta}
		\end{split}
	\end{equation}
	First notice that
	\begin{equation}
		\int_0^t {}_{(H^1(\Lambda))^{\ast}} \langle \Delta z_{\delta}(s) , z_{\delta}(s) \rangle_{H^1(\Lambda)} \mathrm{d}s = - \int_0^t \|\nabla z_{\delta}(s) \|^2_{L^2(\Lambda)} \mathrm{d}s.
	\end{equation}
	Furthermore, we have $\langle R_{\delta}(s) , z_{\delta}(s) \rangle_{L^2(\Lambda)} \leq (\|R_{\delta}(s) \|_{L^2(\Lambda)}^2 + \| z_{\delta}(s) \|_{L^2(\Lambda)}^2 )/2$, and, since $f$ is one-sided Lipschitz continuous, we have
	\begin{equation}
		\begin{split}
			\left \langle S_{\delta}(s),z_{\delta}(s) \right \rangle_{L^2(\Lambda)} &= \frac{1}{\delta^2} \left \langle f( u_s^{g+\delta h} ) - f(u_s^g+\delta y_s^h) , u_s^{g+\delta h} - (u_s^g+\delta y_s^h) \right \rangle_{L^2(\Lambda)}\\
			&\leq \widetilde{\text{Lip}}_{f} \| z_{\delta}(s) \|_{L^2(\Lambda)}^2.
		\end{split}
	\end{equation}
	For the last term in equation \eqref{zdelta}, we have
	\begin{equation}
		\begin{split}
			&\frac12 \int_0^T \| \left ( \Xi_{\delta}(s) + \Sigma_{\delta} (s) \right ) \circ \sqrt{Q} \|_{\text{HS}(L^2(\Lambda))}^2 \mathrm{d}s\\
			\leq &\int_0^T \| \Xi_{\delta}(s) \circ \sqrt{Q} \|_{\text{HS}(L^2(\Lambda))}^2 \mathrm{d}s + \int_0^T \| \Sigma_{\delta} (s) \circ \sqrt{Q} \|_{\text{HS}(L^2(\Lambda))}^2 \mathrm{d}s,
		\end{split}
	\end{equation}
	where
	\begin{equation}
		\| \Xi_{\delta}(s) \circ \sqrt{Q} \|^2_{\text{HS}(L^2(\Lambda))} \leq \text{tr} Q \,\| \Xi_{\delta}(s) \|_{L(L^2(\Lambda))}^2,
	\end{equation}
	and, by the Lipschitz condition \eqref{lipschitzsigma} on $\sigma$,
	\begin{equation}
		\begin{split}
			\left \| \Sigma_{\delta}(s) \circ \sqrt{Q} \right \|^2_{\text{HS}(L^2(\Lambda))} &= \left \| \left ( \frac{1}{\delta} \left ( \sigma(s,u^{g+\delta h}_s) - \sigma(s,u^g_s+ \delta y^h_s) \right ) \right ) \circ \sqrt{Q} \right \|^2_{\text{HS}(L^2(\Lambda))}\\
			&\leq C \left \| \frac{1}{\delta} \left ( u^{g+\delta h}_s - u^g_s \right ) - y^h_s \right \|^2_{L^2(\Lambda)} = C \| z_{\delta}(s) \|^2_{L^2(\Lambda)}.
		\end{split}
	\end{equation}
	Therefore, taking the supremum with respect to $t\in[0,T]$ in equation \eqref{zdelta} and taking expectations, it follows
	\begin{equation}
		\begin{split}
			&\mathbb{E} \left [ \sup_{t\in [0,T]} \left \| z_{\delta}(t) \right \|_{L^2(\Lambda)}^2 \right ] + \mathbb{E} \left [ \int_0^T \|\nabla z_{\delta}(s) \|^2_{L^2(\Lambda)} \mathrm{d}s \right ]\\
			\leq C \Bigg \{ &\mathbb{E} \left [ \int_0^T \left \| R_{\delta}(s) \right \|^2_{L^2(\Lambda)} \mathrm{d}s \right ] + \int_0^T \mathbb{E} \left [ \sup_{s\in[0,t]} \| z_{\delta}(s) \|^2_{L^2(\Lambda)} \right ] \mathrm{d}t\\
			&+ \mathbb{E} \left [ \sup_{t\in [0,T]} \int_0^t \!\left \langle z_{\delta}(s), \Xi_{\delta}(s) \mathrm{d}W^Q_s \right \rangle_{L^2(\Lambda)} \right ]\\
			&+ \mathbb{E} \left [ \sup_{t\in [0,T]} \int_0^t\! \left \langle z_{\delta}(s), \Sigma_{\delta}(s) \mathrm{d}W^Q_s \right \rangle_{L^2(\Lambda)} \right ] \Bigg \}.\label{itoestimate}
		\end{split}
	\end{equation}
	Using Burkholder-Davis-Gundy inequality, we have
	\begin{equation}
		\begin{split}
			&\mathbb{E} \left [ \sup_{t\in [0,T]} \int_0^t\! \left \langle z_{\delta}(s), \Sigma_{\delta}(s) \mathrm{d}W^Q_s \right \rangle_{L^2(\Lambda)} \right ]\\
			\leq& C \mathbb{E} \left [ \left \langle \int_0^{\cdot}\! \left \langle z_{\delta}(s), \Sigma_{\delta}(s) \mathrm{d}W^Q_s \right \rangle_{L^2(\Lambda)} \right \rangle^{\frac12}_T \right ].\label{burkholder2}
		\end{split}
	\end{equation}
	Now we compute the quadratic variation. To this end, let $(e_k)_{k\geq 1}$ be an orthonormal basis of $L^2(\Lambda)$. Then we have
	\begin{equation}\label{quadratic}
		\begin{split}
			&\left \langle \int_0^{\cdot}\! \left \langle z_{\delta}(s), \Sigma_{\delta}(s) \mathrm{d}W^Q_s \right \rangle_{L^2(\Lambda)} \right \rangle^{\frac12}_T\\
			= &\left ( \int_0^T \sum_{k=1}^{\infty} \left | \left \langle z_{\delta}(s) , (\Sigma_{\delta}(s) \circ \sqrt{Q}) e_k \right \rangle_{L^2(\Lambda)} \right |^2 \mathrm{d}s \right )^{\frac12} \\
			\leq& \frac{\varepsilon}{2} \sup_{t\in [0,T]} \| z_{\delta}(t) \|_{L^2(\Lambda)}^2 + \frac{1}{2\varepsilon} \int_0^T \left \| \Sigma_{\delta}(s) \circ \sqrt{Q} \right \|_{\text{HS}(L^2(\Lambda))}^2 \mathrm{d}s,
		\end{split}
	\end{equation}
	for arbitrary $\varepsilon > 0$. With the same estimates as above for $\| \Sigma_{\delta}(s) \circ \sqrt{Q} \|_{\text{HS}(L^2(\Lambda))}^2$ and with inequality \eqref{burkholder2} this yields
	\begin{equation}
		\begin{split}
			&\mathbb{E} \left [ \sup_{t\in [0,T]} \int_0^t\! \left \langle z_{\delta}(s), \Sigma_{\delta}(s) \mathrm{d}W^Q_s \right \rangle_{L^2(\Lambda)} \right ]\\
			\leq &C\varepsilon \mathbb{E} \left [ \sup_{t\in [0,T]} \| z_{\delta}(t) \|_{L^2(\Lambda)}^2 \right ] + \frac{C}{\varepsilon} \mathbb{E} \left [ \int_0^T \| z_{\delta}(s) \|_{L^2(\Lambda)}^2 \mathrm{d}s \right ].\label{burkholder1}
		\end{split}
	\end{equation}
	Furthermore, with similar calculations as above, we get
	\begin{equation}
		\begin{split}
			&\mathbb{E} \left [ \sup_{t\in [0,T]} \int_0^t \!\left \langle z_{\delta}(s), \Xi_{\delta}(s) \mathrm{d}W^Q_s \right \rangle_{L^2(\Lambda)} \right ]\\
			\leq&C\varepsilon \, \mathbb{E} \left [ \int_0^T \| z_{\delta}(s) \|_{L^2(\Lambda)}^2 \mathrm{d}s \right ] + \frac{C}{\varepsilon} \, \mathbb{E} \left [ \int_0^T \left \| \Xi_{\delta}(s) \circ \sqrt{Q} \right \|_{\text{HS}(L^2(\Lambda))}^2 \mathrm{d}s \right ],\label{burkholder3}
		\end{split}
	\end{equation}
	for arbitrary $\varepsilon>0$. Choosing $\varepsilon>0$ in \eqref{burkholder1} and \eqref{burkholder3} small enough, we get from \eqref{itoestimate}
	\begin{equation}
		\begin{split}
			&\mathbb{E} \left [ \sup_{t\in [0,T]} \left \| z_{\delta}(t) \right \|_{L^2(\Lambda)}^2 \right ] + \mathbb{E} \left [ \int_0^T \|\nabla z_{\delta}(s) \|^2_{L^2(\Lambda)} \mathrm{d}s \right ]\\
			\leq C \Bigg \{ &\int_0^T \mathbb{E} \left [ \sup_{s\in[0,t]} \| z_{\delta}(s) \|^2_{L^2(\Lambda)} \right ] \mathrm{d}t + \mathbb{E} \left [ \int_0^T \left \| R_{\delta}(s) \right \|^2_{L^2(\Lambda)} \mathrm{d}s \right ]\\
			&+ \mathbb{E} \left [ \int_0^T \left \| \Xi_{\delta}(s) \circ \sqrt{Q} \right \|_{\text{HS}(L^2(\Lambda))}^2 \mathrm{d}s \right ] \Bigg \}.
		\end{split}
	\end{equation}
	By Gr\"onwall's inequality, this yields
	\begin{equation}
		\begin{split}
			&\mathbb{E} \left [ \sup_{s\in[0,T]} \| z_{\delta}(s) \|_{L^2(\Lambda)}^2 \right ] + \mathbb{E} \left [ \int_0^T \|\nabla z_{\delta}(s) \|^2_{L^2(\Lambda)} \mathrm{d}s \right ]\\
			\leq &C \left ( \mathbb{E} \left [ \int_0^T \| R_{\delta} (s) \|_{L^2(\Lambda)}^2 \mathrm{d}s \right ] + \mathbb{E} \left [ \int_0^T \left \| \Xi_{\delta}(s) \circ \sqrt{Q} \right \|_{\text{HS}(L^2(\Lambda))}^2 \mathrm{d}s \right ] \right ).
		\end{split}
	\end{equation}
	Since $R_{\delta} \to 0$ as $\delta\to0$ for almost all $(\omega,t,x)\in\Omega\times[0,T]\times\Lambda$, we get by the dominated convergence theorem
	\begin{equation}\label{dominatedconvergence}
		\lim_{\delta\to 0} \mathbb{E}\left [ \int_0^T \| R_{\delta}(t) \|^2_{L^2(\Lambda)} \mathrm{d}t\right ] = 0.
	\end{equation}
	Here, we used that $R_{\delta}$ is dominated in the following way: By assumption \eqref{growthcondition}, Taylor's formula and elementary estimates, we have
	\begin{equation}
		|R_{\delta}| \leq C \left (1+ \left | u^g \right |^3 + \left | y^h \right |^3 \right ).
	\end{equation}
	The boundedness of the right hand side in $L^2( \Omega\times [0,T] \times \Lambda)$ follows immediately from Corollary \ref{nirenberg} (notice that we get the boundedness of $y^h$ in $L^6(\Omega\times [0,T] \times \Lambda )$ by the same arguments as for $u^g$). Furthermore, we have
	\begin{equation}
		\lim_{\delta \to 0} \mathbb{E} \left [ \int_0^T \left \| \Xi_{\delta}(s) \circ \sqrt{Q} \right \|_{\text{HS}(L^2(\Lambda))}^2 \mathrm{d}s \right ] = 0
	\end{equation}
	since by the Lipschitz condition \eqref{lipschitzsigma} on $\sigma$ and the bound on the Fr\'{e}chet derivative \eqref{frechetsigma} of $\sigma$ we have the following bound:
	\begin{equation}
		\begin{split}
			&\left \|\Xi_{\delta}(s)\circ \sqrt{Q} \right \|^2_{\text{HS}(L^2(\Lambda))}\\
			=&\left \| \left ( \frac{1}{\delta} \left ( \sigma(s,u_s^g+\delta y_s^h) - \sigma(s,u_s^g) \right ) - \sigma^{\prime} (s,u_s^g) y_s^h \right )\circ \sqrt{Q} \right \|^2_{\text{HS}(L^2(\Lambda))}\\
			\leq & 2\left \| \frac{1}{\delta} \left ( \sigma(s,u_s^g+\delta y_s^h) - \sigma(s,u_s^g) \right ) \circ \sqrt{Q} \right \|^2_{\text{HS}(L^2(\Lambda))}\! + 2\left \| \sigma^{\prime} (s,u_s^g) y_s^h \circ \sqrt{Q} \right \|^2_{\text{HS}(L^2(\Lambda))}\\
			\leq & C\left ( 1 + \| y^h \|^2_{L^2(\Lambda)} \right ).
		\end{split}
	\end{equation}
	This completes the proof that $z_{\delta}$ converges to $0$ in $L^2([0,T]\times \Omega; H^1(\Lambda))$ and in\\ $L^2(\Omega;C([0,T];L^2(\Lambda)))$. From the definition of $h\mapsto y^h$, it follows immediately that this is linear. Thus, for the G\^ateaux differentiability it remains to show that $h\mapsto y^h$ is continuous. But this follows with the same arguments as in Proposition \ref{lipschitzcontinuity}.
\end{proof}

As a corollary we get the following representation for the G\^{a}teaux derivative of the cost functional.

\begin{corollary}\label{firstgateaux}
	For every $h\in L^6([0,T]\times\Lambda)$, the cost functional\\
	$J: L^6\left ( [0,T]\times \Lambda \right ) \to \mathbb{R}$ is G\^{a}teaux differentiable in the direction $h$ with G\^{a}teaux derivative
	\begin{equation}
		\begin{split}
			&\frac{\partial J(g)}{\partial h} =\mathbb{E} \Bigg [ c_{\overline{\Lambda}} \int_0^T \int_{\Lambda} y^h_t(x) \left ( u_t^{g}(x) - u_{\overline{\Lambda}}(t,x) \right ) \mathrm{d}x \mathrm{d}t\\
			&+ c_T \int_{\Lambda} y^h_T(x) \left ( u_T^{g}(x)-u^T(x) \right ) \mathrm{d}x + \lambda \int_0^T \int_{\Lambda} g(t,x) h(t,x)\mathrm{d}x \mathrm{d}t \Bigg ],
		\end{split}
	\end{equation}
	where $y^h$ denotes the variational solution of the SPDE \eqref{gateaux}.
\end{corollary}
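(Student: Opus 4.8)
The plan is to exploit the additive structure $J(g) = I_1(u^g) + I_2(g)$ and to differentiate each summand separately, the first by means of Proposition \ref{gateauxsolutionmap}. The second summand is an explicit quadratic form in $g$: for every $h \in L^6([0,T]\times\Lambda)$,
\[
\frac{I_2(g+\delta h) - I_2(g)}{\delta} = \lambda \int_0^T\int_\Lambda g(t,x)h(t,x)\,\mathrm{d}x\,\mathrm{d}t + \frac{\delta\lambda}{2}\int_0^T\int_\Lambda h^2(t,x)\,\mathrm{d}x\,\mathrm{d}t,
\]
which converges as $\delta \to 0$ to the last term in the claimed formula.

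For $I_1$ I would use the notation of the previous proof: writing $z_\delta := \delta^{-1}(u^{g+\delta h} - u^g) - y^h$, Proposition \ref{gateauxsolutionmap} gives $u^{g+\delta h} - u^g = \delta(y^h + z_\delta)$ with $z_\delta \to 0$ in $L^2([0,T]\times\Omega; H^1(\Lambda)) \cap L^2(\Omega; C([0,T]; L^2(\Lambda)))$. Expanding the squares in $\delta^{-1}(I_1(u^{g+\delta h}) - I_1(u^g))$ produces, for the running-cost part,
\[
c_{\overline{\Lambda}}\,\E\!\left[\int_0^T \langle u^g_t - u_{\overline{\Lambda}}(t),\, y^h_t + z_\delta(t)\rangle_{L^2(\Lambda)}\,\mathrm{d}t\right] + \frac{\delta c_{\overline{\Lambda}}}{2}\,\E\!\left[\int_0^T \|y^h_t + z_\delta(t)\|_{L^2(\Lambda)}^2\,\mathrm{d}t\right],
\]
together with the entirely analogous terminal-cost contribution $c_T\,\E[\langle u^g_T - u^T,\, y^h_T + z_\delta(T)\rangle_{L^2(\Lambda)}] + \tfrac{\delta c_T}{2}\,\E[\|y^h_T + z_\delta(T)\|_{L^2(\Lambda)}^2]$.

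It then remains to pass to the limit $\delta \to 0$. The terms containing $z_\delta$ vanish by Cauchy--Schwarz: for example
\[
\left| \E\!\left[\int_0^T \langle u^g_t - u_{\overline{\Lambda}}(t),\, z_\delta(t)\rangle_{L^2(\Lambda)}\,\mathrm{d}t\right] \right| \leq \|u^g - u_{\overline{\Lambda}}\|_{L^2(\Omega\times[0,T]\times\Lambda)}\,\|z_\delta\|_{L^2(\Omega\times[0,T]\times\Lambda)},
\]
whose first factor is finite by Corollary \ref{nirenberg} and the hypothesis $u_{\overline{\Lambda}} \in L^2([0,T]\times\Lambda)$, while the second tends to $0$. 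The $\delta$-weighted quadratic terms vanish as well, since $\E[\int_0^T \|y^h_t\|_{L^2(\Lambda)}^2\,\mathrm{d}t]$ is finite (the moment bounds of Proposition \ref{aprioribound} and Corollary \ref{nirenberg} apply verbatim to $y^h$) and $\|z_\delta\|$ remains bounded. The terminal terms are treated identically, the convergence $z_\delta(T) \to 0$ in $L^2(\Omega; L^2(\Lambda))$ being furnished by $z_\delta \to 0$ in $L^2(\Omega; C([0,T]; L^2(\Lambda)))$. Collecting what survives and adding the $I_2$ contribution yields the stated derivative.

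The computation is conceptually a chain rule, valid because $I_1$ is a continuous quadratic, hence Fr\'echet differentiable, functional composed with the G\^ateaux-differentiable solution map of Proposition \ref{gateauxsolutionmap}; the only genuine work is the uniform control of the remainders, for which the strong convergence $z_\delta \to 0$ and the $L^6$-integrability of $u^g$ and $y^h$ from Corollary \ref{nirenberg} are exactly what is needed.
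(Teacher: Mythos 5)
Your proposal is correct and takes essentially the same route as the paper: both split $J(g)=I_1(u^g)+I_2(g)$, differentiate the quadratic $I_2$ directly, and obtain the $I_1$ term by composing its derivative with the G\^ateaux derivative of the solution map from Proposition \ref{gateauxsolutionmap}. The only difference is that you execute the chain-rule step by hand, with explicit Cauchy--Schwarz estimates on the remainder $z_\delta$ justified by the convergence established in Proposition \ref{gateauxsolutionmap} and the integrability from Corollary \ref{nirenberg}, where the paper simply invokes the chain rule --- a slightly more careful elaboration of the same argument.
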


\begin{proof}
	Recall that the cost functional is given by
	\begin{equation}
		J(g) \;:=\; I_1(u^g) + I_2(g),
	\end{equation}
	where
	\begin{equation}
		I_1(v) \!:=\! \mathbb{E} \left [ \frac{c_{\overline{\Lambda}}}{2} \int_0^T \! \int_{\Lambda} \left ( v(t,x) - u_{\overline{\Lambda}}(t,x) \right )^2 \mathrm{d}x \mathrm{d}t + \frac{c_T}{2} \int_{\Lambda} \left ( v(T,x) - u^T(x) \right )^2 \mathrm{d}x \right ]
	\end{equation}
	and
	\begin{equation}
		I_2(g) \;:=\; \frac{\lambda}{2} \int_0^T \int_{\Lambda} g^2(t,x) \mathrm{d}x \mathrm{d}t.
	\end{equation}
	Hence
	\begin{equation}
		\frac{\partial J(g)}{\partial h} = \frac{\partial I_1\left ( u^{g} \right )}{\partial h} + \frac{\partial I_2(g)}{\partial h}.
	\end{equation} 
	Let $g \in L^6\left ( [0,T]\times \Lambda \right )$ be fixed. For $h\in L^6([0,T]\times\Lambda)$, we get for the G\^{a}teaux derivative of $I_2$
	\begin{equation}\label{itwo}
		\frac{\partial I_2(g)}{\partial h} = \lambda \int_0^T \int_{\Lambda} g(t,x) h(t,x)\mathrm{d}x \mathrm{d}t.
	\end{equation}
	On the other hand we get for the G\^{a}teaux derivative of $I_1$
	\begin{equation}
		\frac{\partial I_1(v)}{\partial w} = \mathbb{E} \left [ c_{\overline{\Lambda}} \int_0^T \int_{\Lambda} w \left ( v - u_{\overline{\Lambda}} \right ) \mathrm{d}x \mathrm{d}t + c_T \int_{\Lambda} w \left ( v-u^T \right ) \mathrm{d}x \right ].
	\end{equation}
	Hence, by the chain rule, we get
	\begin{equation}
		\frac{\partial I_1\left ( u^{g} \right )}{\partial h}=\mathbb{E} \left [ c_{\overline{\Lambda}} \int_0^T \int_{\Lambda} \frac{\partial u^{g}}{\partial h} \left ( u^{g} - u_{\overline{\Lambda}} \right ) \mathrm{d}x \mathrm{d}t + c_T \int_{\Lambda} \frac{\partial u^{g}}{\partial h} \left ( u^{g}-u^T \right ) \mathrm{d}x \right ],
	\end{equation}
	which, together with equation \eqref{itwo} and Proposition \ref{gateauxsolutionmap}, completes the proof.
\end{proof}

Now we can state a necessary condition for $J$ to attain a minimum.

\begin{theorem}\label{eulerlagrange}
	Let $J$ attain a (local) minimum at $g^{\ast} \in G_{\text{ad}}$. Then, for every $h\in G_{\text{ad}}$ we have
	\begin{equation}\label{necessarycondition}
		\frac{\partial J(g^{\ast})}{\partial (h-g^{\ast})} \geq 0.
	\end{equation}
\end{theorem}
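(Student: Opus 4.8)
The plan is to exploit the convexity of $G_{\text{ad}}$ together with the G\^ateaux differentiability of $J$ established in Corollary \ref{firstgateaux}. Fix an arbitrary $h\in G_{\text{ad}}$ and consider the perturbation direction $h-g^{\ast}$. Since both $g^{\ast}$ and $h$ lie in $L^6([0,T]\times\Lambda)$, their difference is again in $L^6([0,T]\times\Lambda)$, so $h-g^{\ast}$ is an admissible direction for the G\^ateaux derivative of $J$.

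First I would invoke the convexity of $G_{\text{ad}}$ (already used in the proof of Theorem \ref{atleastonesolution}): for every $t\in[0,1]$ the convex combination
\[
g^{\ast} + t(h-g^{\ast}) = (1-t)g^{\ast} + t\,h
\]
belongs to $G_{\text{ad}}$. Because $t\mapsto (1-t)g^{\ast}+th$ is continuous and equals $g^{\ast}$ at $t=0$, for all sufficiently small $t>0$ this point lies both in $G_{\text{ad}}$ and in the neighborhood of $g^{\ast}$ on which $g^{\ast}$ is minimal. Local minimality of $g^{\ast}$ then gives $J(g^{\ast}) \leq J\bigl(g^{\ast} + t(h-g^{\ast})\bigr)$ for all small $t>0$, and hence
\[
\frac{J\bigl(g^{\ast} + t(h-g^{\ast})\bigr) - J(g^{\ast})}{t} \geq 0.
\]

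Finally I would pass to the limit $t\to 0^{+}$. By Corollary \ref{firstgateaux} the G\^ateaux derivative $\frac{\partial J(g^{\ast})}{\partial(h-g^{\ast})}$ exists as the (two-sided) limit of the difference quotients, so in particular it agrees with the one-sided limit as $t\downarrow 0$. The inequality above is preserved under this limit, which yields $\frac{\partial J(g^{\ast})}{\partial(h-g^{\ast})} \geq 0$, the desired variational inequality. Since $h\in G_{\text{ad}}$ was arbitrary, the claim follows.

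This is the standard first-order variational inequality for a differentiable functional on a convex set, so I do not expect a genuine obstacle. The only points requiring care are that the admissible segment stays in $G_{\text{ad}}$ (guaranteed by convexity) and simultaneously inside the neighborhood witnessing local optimality for small $t$ (guaranteed by continuity of the segment at $t=0$), and that the one-sided difference quotient indeed converges to the G\^ateaux derivative, which is exactly the content of Corollary \ref{firstgateaux}.
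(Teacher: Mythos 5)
Your proposal is correct and follows essentially the same route as the paper's own proof: form the convex combination $g^{\ast}+t(h-g^{\ast})\in G_{\text{ad}}$, use local minimality to get a nonnegative difference quotient for small $t>0$, and let $t\downarrow 0$ to identify the limit with the G\^ateaux derivative. Your version merely makes explicit a few points the paper leaves implicit (convexity of $G_{\text{ad}}$, the segment staying in the neighborhood of local optimality, and the one-sided limit agreeing with the derivative from Corollary \ref{firstgateaux}).
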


\begin{proof}
	Let $h\in G_{\text{ad}}$, and set $\delta_t := g^{\ast} + t(h-g^{\ast}) \in G_{\text{ad}}$. Since $g^{\ast}$ is a local minimizer, there exists a $t_0>0$ such that for all $t\in (0,t_0)$ we have
	\begin{equation}
		J(g^{\ast}) \leq J(\delta_t).
	\end{equation}
	This implies
	\begin{equation}
		\frac{1}{t} \left ( J(g^{\ast} + t(h-g^{\ast})) - J(g^{\ast}) \right ) \geq 0.
	\end{equation}
	Letting $t$ tend to zero yields the claim.
\end{proof}

\section{The Gradient of the Cost Functional}\label{gradientsection}
In this section, we are going to derive a representation for the gradient of the cost functional via adjoint calculus. Recall the state equation
\begin{equation}
	\begin{cases}
		\mathrm{d} u^g_t = \left [ \Delta u^g_t + f\left ( u^g_t \right ) + b(t)g(t) \right ] \mathrm{d} t + \sigma(t,u^g_t) \mathrm{d}W^Q_t &\text{on } L^2(\Lambda)\\
		u^g_0(x) = u^0(x) &x\in \Lambda.
	\end{cases}
\end{equation}

In Section \ref{firstorder}, we proved the following representation
\begin{equation}\label{firstgateauxsolutionmap}
	\begin{split}
		&\frac{\partial J(g)}{\partial h} =\mathbb{E} \Bigg [ c_{\overline{\Lambda}} \int_0^T \int_{\Lambda} y^h_t(x) \left ( u_t^{g}(x) - u_{\overline{\Lambda}}(t,x) \right ) \mathrm{d}x \mathrm{d}t\\
		&+ c_T \int_{\Lambda} y^h_T(x) \left ( u_T^{g}(x)-u^T(x) \right ) \mathrm{d}x + \lambda \int_0^T \int_{\Lambda} g(t,x) h(t,x)\mathrm{d}x \mathrm{d}t \Bigg ],
	\end{split}
\end{equation}
where $y^h$ is the variational solution of
\begin{equation}
	\begin{cases}
		\mathrm{d} y_t^h = [\Delta y_t^h + f^{\prime}(u_t^g)y_t^h + b(t)h(t) ] \mathrm{d}t + \sigma^{\prime}(t,u^g_t) y^h_t \mathrm{d}W^Q_t&\text{on } L^2(\Lambda)\\
		y^h_0(x)=0&x\in\Lambda.
	\end{cases}
\end{equation}
The adjoint equation that is used in the existing literature is
\begin{equation}
	\begin{cases}
		-\mathrm{d}p_t = \left [ \Delta p_t + f^{\prime}(u_t^g)p_t + c_{\overline{\Lambda}} \left ( u_t^g - u_{\overline{\Lambda}}(t,\cdot) \right ) + \partial_u \sigma(t,u^g_t)^{\ast} P_t \right ] \mathrm{d}t\\
		\qquad\qquad\qquad- P_t \mathrm{d}W^Q_t &\text{on } [0,T]\times \Lambda\\
		p_T(x) = c_T \left ( u^g_T(x) - u^T(x) \right )&x\in \Lambda,
	\end{cases}
\end{equation}
for some processes $p\in L^2(\Omega\times [0,T];V)$ and $P\in L^2(\Omega\times[0,T];L_2(U,H))$. The derivation of the Stochastic Minimum Principle with this adjoint equation works in our setting as well. However, the numerical approximation of the solution of this adjoint equation is extremely costly. Therefore, we restrict our analysis to the case of additive noise in the state equation.
\begin{equation}
	\begin{cases}
		\mathrm{d} u^g_t = \left [ \Delta u^g_t + f\left ( u^g_t \right ) + b(t)g(t) \right ] \mathrm{d} t + \sigma \mathrm{d}W^Q_t &\text{on } L^2(\Lambda)\\
		u^g_0(x) = u^0(x) &x\in \Lambda,
	\end{cases}
\end{equation}
for some $\sigma \in \mathbb{R}$. In this case, the linearized equation becomes
\begin{equation}\label{gateaux2}
	\begin{cases}
		\partial_t y_t^h = \Delta y_t^h + f^{\prime}(u_t^g)y_t^h + b(t)h(t)&\text{on } L^2(\Lambda)\\
		y^h_0(x)=0&x\in\Lambda,
	\end{cases}
\end{equation}
which is a random partial differential equation (the coefficient $f^{\prime}(u^g_t)$ is random). Now, we introduce the following random backward PDE for the adjoint state.
\begin{equation}\label{adjoint2}
	\begin{cases}
		-\partial_t p_t = \Delta p_t + f^{\prime}(u^g)p_t + c_{\overline{\Lambda}} \left ( u_t^g - u_{\overline{\Lambda}}(t,\cdot) \right ) &\text{on } [0,T]\times \Lambda\\
		p_T(x) = c_T \left ( u^g_T(x) - u^T(x) \right )&x\in \Lambda.
	\end{cases}
\end{equation}
One crucial point for our algorithm is the fact that the adjoint equation is a random backward PDE. The following property of the adjoint state is the main ingredient in the derivation of the gradient of the cost functional.
\begin{lemma}\label{propertyadjointstate}
	Let $p$ be the solution of the adjoint equation \eqref{adjoint2} and let $y^h$ be the solution of equation \eqref{gateaux2} associated with $u^g$. Then we have almost surely for every $h\in L^6([0,T]\times\Lambda)$
	\begin{equation}
		\int_0^T \int_{\Lambda} b(t) h(t) p_t \mathrm{d}x \mathrm{d}t = \int_0^T \int_{\Lambda} c_{\overline{\Lambda}} (u^g_t -u_{\overline{\Lambda}}(t,\cdot)) y_t^h \mathrm{d}x \mathrm{d}t + \int_{\Lambda} c_T (u^g_T - u^T ) y^h_T \mathrm{d}x.
	\end{equation}
\end{lemma}

\begin{proof}
	By the deterministic integration by parts formula, it holds pathwise
	\begin{equation}
		y^h_T p_T - y_0^h p_0 = \int_0^T y^h_t \mathrm{d} p_t + \int_0^T p_t \mathrm{d} y_t^h.
	\end{equation}
	Plugging in equations \eqref{gateaux2} and \eqref{adjoint2}, respectively, this yields
	\begin{equation}
		\begin{split}
			y^h_T c_T (u^g_T -u^T) =& -\int_0^T y^h_t \left ( \Delta p_t +f^{\prime} (u^g_t) p_t + c_{\overline{\Lambda}} (u^g_t - u_{\overline{\Lambda}}(t,\cdot)) \right ) \mathrm{d}t\\
			&+ \int_0^T p_t \left ( \Delta y^h_t + f^{\prime} (u^g_t ) y^h_t +b(t) h(t) \right ) \mathrm{d}t.
		\end{split}
	\end{equation}
	Integrating over $\Lambda$, and integrating the Laplace operator by parts, we get
	\begin{equation}
		\int_{\Lambda} y^h_T c_T (u^g_T - u^T ) \mathrm{d}x = \int_0^T \int_{\Lambda} b(t)h(t)p_t - c_{\overline{\Lambda}} ( u_t^g - u_{\overline{\Lambda}}(t,\cdot) ) y_t^h \mathrm{d}x \mathrm{d}t,
	\end{equation}
	which is the claimed result.
\end{proof}

As a corollary, we get the following representation for the gradient of the cost functional.

\begin{theorem}\label{gradient}
	The gradient of the cost functional is given by
	\begin{equation}
		\nabla J(g)(t,x) = \mathbb{E} \left [ b(t)p_t(x)+\lambda g(t,x) \right ],
	\end{equation}
	where $p$ is the solution of the adjoint equation
	\begin{equation}
		\begin{cases}
			-\partial_t p_t = \Delta p_t + f^{\prime}(u_t^g)p_t + c_{\overline{\Lambda}} \left ( u_t^g - u_{\overline{\Lambda}}(t,\cdot) \right ) &\text{on } [0,T]\times \Lambda\\
			p_T(x) = c_T \left ( u^g_T(x) - u^T(x) \right )&x\in \Lambda.
		\end{cases}
	\end{equation}
\end{theorem}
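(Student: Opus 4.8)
The plan is to combine the representation of the Gâteaux derivative from Corollary \ref{firstgateaux} with the duality identity of Proposition \ref{propertyadjointstate} in order to eliminate the auxiliary linearized process $y^h$ entirely, leaving an expression that is manifestly an $L^2([0,T]\times\Lambda)$ inner product against the direction $h$. The gradient can then be read off directly via the Riesz representation theorem.

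First I would recall from Corollary \ref{firstgateaux} that
\begin{align*}
\frac{\partial J(g)}{\partial h} = \mathbb{E}\Bigg[ &c_{\overline{\Lambda}} \int_0^T \int_\Lambda y^h_t (u^g_t - u_{\overline{\Lambda}}) \,\mathrm{d}x\,\mathrm{d}t + c_T \int_\Lambda y^h_T (u^g_T - u^T) \,\mathrm{d}x \\
&+ \lambda \int_0^T \int_\Lambda g h \,\mathrm{d}x\,\mathrm{d}t \Bigg],
\end{align*}
and observe that the first two terms inside the expectation are precisely the right-hand side of the identity established in Proposition \ref{propertyadjointstate}. Since that identity holds $\mathbb{P}$-almost surely, I can substitute it under the expectation, replacing all $y^h$-dependent terms by $\int_0^T \int_\Lambda b p h \,\mathrm{d}x\,\mathrm{d}t$. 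This collapses the derivative to
\begin{equation*}
\frac{\partial J(g)}{\partial h} = \mathbb{E}\left[ \int_0^T \int_\Lambda (b p + \lambda g) h \,\mathrm{d}x\,\mathrm{d}t \right].
\end{equation*}

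Next, since $b$, $g$ and $h$ are deterministic, I would exchange expectation with the space-time integration by Fubini's theorem and factor the deterministic quantities out of the expectation, obtaining
\begin{equation*}
\frac{\partial J(g)}{\partial h} = \int_0^T \int_\Lambda \big( b(t)\,\mathbb{E}[p_t(x)] + \lambda g(t,x) \big) h(t,x) \,\mathrm{d}x\,\mathrm{d}t = \big\langle \mathbb{E}[b p + \lambda g], h \big\rangle_{L^2([0,T]\times\Lambda)}.
\end{equation*}
As the Gâteaux derivative is a continuous linear functional of $h$, the Riesz representation theorem identifies the gradient with the integrand, i.e. $\nabla J(g)(t,x) = \mathbb{E}[b(t) p_t(x) + \lambda g(t,x)]$, as claimed.

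The substance of the argument has already been carried out in Proposition \ref{propertyadjointstate}; the only points requiring genuine care here are the justification of the Fubini exchange, whose applicability follows from the integrability of $p$ against $h\in L^6$ provided by the a priori estimates of Proposition \ref{aprioribound} and Corollary \ref{nirenberg}, and the conceptual observation that representing the derivative as an inner product is meaningful precisely because $h$ ranges over \emph{deterministic} functions. It is exactly this restriction to deterministic controls that makes $\mathbb{E}[b p + \lambda g]$ — a deterministic field rather than a random one — the correct gradient, so that no backward SPDE and no second adjoint process $P$ are needed.
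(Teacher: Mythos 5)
Your proposal is correct and follows essentially the same route as the paper's own proof: it combines the G\^ateaux derivative representation from Corollary \ref{firstgateaux} with the almost-sure duality identity of Proposition \ref{propertyadjointstate} to eliminate $y^h$ and read off the gradient. Your additional remarks on the Fubini exchange and the Riesz identification merely make explicit the final step the paper leaves implicit, so there is no substantive difference.
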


\begin{proof}
	By Corollary \ref{firstgateaux}, we have
	\begin{equation}
		\begin{split}
			\frac{\partial J(g)}{\partial h} =&\mathbb{E} \Bigg [ c_{\overline{\Lambda}} \int_0^T \int_{\Lambda} y^h_t(x) \left ( u_t^{g}(x) - u_{\overline{\Lambda}}(t,x) \right ) \mathrm{d}x \mathrm{d}t\\
			&+ c_T \int_{\Lambda} y^h_T(x) \left ( u_T^{g}(x)-u^T(x) \right ) \mathrm{d}x + \lambda \int_0^T \int_{\Lambda} g(t,x) h(t,x)\mathrm{d}x \mathrm{d}t \Bigg ],
		\end{split}
	\end{equation}
	where $y^h$ denotes the variational solution of the random PDE \eqref{gateaux}. Now, by Lemma \ref{propertyadjointstate}, this yields
	\begin{equation}
		\frac{\partial J(g)}{\partial h} =\mathbb{E} \Bigg [ \int_0^T \int_{\Lambda}  b(t) h(t) p_t \mathrm{d}x \mathrm{d}t + \lambda \int_0^T \int_{\Lambda} g(t) h(t) \mathrm{d}x \mathrm{d}t \Bigg ],
	\end{equation}
	which completes the proof.
\end{proof}

Furthermore, by plugging this representation into the necessary condition derived in Theorem \ref{eulerlagrange}, we get the Stochastic Minimum Principle.

\begin{theorem}
	Let $J$ attain a (local) minimum at $g^{\ast} \in G_{\text{ad}}$. Then, for every $h\in G_{\text{ad}}$ we have
	\begin{equation}
		\mathbb{E} \left [ \int_0^T \int_{\Lambda} (b(t) p_t(x) +\lambda g^{\ast}(t,x)) (h(t,x) -g^{\ast}(t,x)) \mathrm{d}x \mathrm{d}t \right ] \geq 0.
	\end{equation}
\end{theorem}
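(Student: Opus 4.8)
The plan is to combine the necessary first-order condition of Theorem \ref{eulerlagrange} with the explicit gradient representation established in the preceding theorem. Both results are phrased in terms of directional G\^ateaux derivatives, so the only point requiring a moment of care is to check that the direction $h-g^{\ast}$ is admissible and that the gradient formula may be evaluated along it; the statement is then essentially a direct corollary.

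First I would fix an arbitrary $h\in G_{\text{ad}}$ and consider the direction $h-g^{\ast}$. Since $G_{\text{ad}}\subset L^6([0,T]\times\Lambda)$ and the latter is a vector space, we have $h-g^{\ast}\in L^6([0,T]\times\Lambda)$, so $h-g^{\ast}$ is an admissible direction for the G\^ateaux derivative of $J$ constructed in Corollary \ref{firstgateaux}. Because $J$ attains a local minimum at $g^{\ast}$, Theorem \ref{eulerlagrange} applied with this direction gives
\begin{equation*}
\frac{\partial J(g^{\ast})}{\partial(h-g^{\ast})}\geq 0.
\end{equation*}

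Next I would substitute the gradient representation from the preceding theorem, evaluated at $g=g^{\ast}$ and in the direction $h-g^{\ast}$. Here one uses that the map $h\mapsto \partial J(g)/\partial h$ is linear, which follows from the linearity of $h\mapsto y^h$ shown in Proposition \ref{gateauxsolutionmap}, and that $p$ denotes the adjoint state associated with the optimal state $u^{g^{\ast}}$. This yields
\begin{equation*}
\frac{\partial J(g^{\ast})}{\partial(h-g^{\ast})}=\mathbb{E}\left[\int_0^T\!\int_{\Lambda} b(t)p_t(x)\,(h-g^{\ast})(t,x)\,\mathrm{d}x\,\mathrm{d}t + \lambda\int_0^T\!\int_{\Lambda} g^{\ast}(t,x)\,(h-g^{\ast})(t,x)\,\mathrm{d}x\,\mathrm{d}t\right].
\end{equation*}

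Combining the two displays and collecting the integrands over the common factor $(h-g^{\ast})$ produces the asserted inequality. Since the analytic content has already been carried out in Theorem \ref{eulerlagrange} and in the gradient representation theorem, I do not expect any genuine obstacle here. The only bookkeeping to verify is that the adjoint state $p$ entering the gradient formula is precisely the solution of \eqref{adjoint2} driven by the optimal state $u^{g^{\ast}}$, so that the inequality is meaningful as a pointwise-in-direction condition characterizing $g^{\ast}$.
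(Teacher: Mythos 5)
Your proposal is correct and follows exactly the paper's route: the paper proves this theorem in one line by substituting the gradient representation $\frac{\partial J(g^{\ast})}{\partial h}=\mathbb{E}\bigl[\int_0^T\int_{\Lambda}(b\,p+\lambda g^{\ast})h\,\mathrm{d}x\,\mathrm{d}t\bigr]$ (valid for the additive-noise setting of Section \ref{gradientsection}) into the variational inequality of Theorem \ref{eulerlagrange} applied in the direction $h-g^{\ast}$. Your additional checks --- that $h-g^{\ast}\in L^6([0,T]\times\Lambda)$ is an admissible direction, that the directional derivative is linear in the direction, and that $p$ is the adjoint state driven by $u^{g^{\ast}}$ --- are exactly the bookkeeping the paper leaves implicit.
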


\section{Nonlinear Conjugate Gradient Descent}\label{gradientdescent}
Now that we have identified a representation for the gradient, we can apply a probabilistic nonlinear conjugate gradient descent method in order to approximate the optimal control. We are going to briefly sketch our algorithm here. For a survey of nonlinear conjugate gradient descent methods see \cite{hager2006}.

Let the initial control $g_0\in L^6\left ( [0,T] \times \Lambda \right )$ be given, and fix an initial step size $s_0>0$ as well as a stopping criterion $\eta >0$. Then, the next control can be found as follows.
\begin{enumerate}
	\item \label{step1} Solve the state equation
	\begin{equation}
		\begin{cases}
			\mathrm{d} u^{g_n}_t = \left [ \Delta u^{g_n}_t + f\left ( u^{g_n}_t \right ) + b(t)g_n(t) \right ] \mathrm{d} t + \sigma \mathrm{d}W^Q_t &\text{on } L^2(\Lambda)\\
			u^{g_n}_0(x) = u^0(x) &x\in \Lambda
		\end{cases}
	\end{equation}
	for one realization of the noise.
	\item \label{step2} Solve the adjoint equation
	\begin{equation}
		\begin{cases}
			-\partial_t p_t^n = \Delta p_t^n + f^{\prime}(u_t^{g_n})p_t^n + c_{\overline{\Lambda}} \left ( u_t^{g_n} - u_{\overline{\Lambda}}(t,\cdot) \right ) &\text{on } [0,T]\times \Lambda\\
			p^n_T(x) = c_T \left ( u^{g_n}_T(x) - u^T(x) \right )&x\in \Lambda.
		\end{cases}
	\end{equation}
	with the data given by the sample of the solution of the state equation that was calculated in Step \ref{step1}.
	\item Repeat Step \ref{step1} and Step \ref{step2} to approximate
	\begin{equation}
		\nabla J(g_n)(t,x) = \mathbb{E} \left [ b(t)p^n_t(x)+\lambda g_n(t,x) \right ]
	\end{equation}
	via a Monte Carlo method.
	\item The direction of descent is given by $d_n=-\nabla J(g_n)+\beta_n d_{n-1}$, where $\beta_n= \frac{\|\nabla J(g_n)\|}{\|\nabla J(g_{n-1})\|}$. (In the first step, $\beta_1 =0$.)
	\item \label{step5} Compute the new control via $g_{n+1}=g_n + s_n d_n$.
	\item Accept or deny the new control: Again using a Monte Carlo method, we compare the costs under the new control with the costs under the old control. If the new control decreases the costs, we accept the new control and go back to Step \ref{step1}. Otherwise, we decrease the step size $s_{n}=s_n/2$ and then go back to Step \ref{step5}. (In our simulations, it has proven useful to accept the new control even if the costs are non-decreasing, once the step size gets too small, e.g. $s_n < 10^{-4}$.)
	\item Stop if $\| \nabla J(g_n) \| < \eta$, otherwise reset the step size $s_n=s_0$ and go to step \ref{step1}.
\end{enumerate}

\section{Application to Optimal Control of the Stochastic Schl\"ogl Model}\label{application}

In this section we want to present the application of the algorithm that was introduced in Section \ref{gradientdescent} to the stochastic Schl\"ogl model. We are going to investigate two examples. The first one is to control the speed and the direction of travel of the wave developing in the Schl\"ogl model with additive noise; the second one is an example, where the optimal control of the deterministic system differs from the optimal control of the stochastic system. Corresponding results for the deterministic model can be found in the work by Buchholz et al. (see \cite{buchholz2013}).

\subsection{Steering of a Wave Front}\label{steering}

Let us first recall the Schl\"ogl model. We consider the state equation
\begin{equation}
	\begin{cases}
		\mathrm{d} u^{g}_t = \left [ \Delta u^{g}_t + f\left ( u^{g}_t \right ) + b(t)g(t) \right ] \mathrm{d} t + \sigma \mathrm{d}W^Q_t &\text{on } L^2(\Lambda)\\
		u^{g}_0(x) = u^0(x) &\text{in } \Lambda
	\end{cases}
\end{equation}
with homogeneous Neumann boundary conditions, where $b\equiv 1$, and the nonlinearity is of the form $f(u) = ku(u-1)(a-u)$ for some $k>0$, $a\in(0,1)$, and $\sigma\in\mathbb{R}$, i.e. the state equation takes the form
\begin{equation}
	\begin{cases}
		\mathrm{d} u^{g}_t = \left [ \Delta u^{g}_t + ku^{g}_t(u^{g}_t-1)(a- u^{g}_t) + g(t) \right ] \mathrm{d} t + \sigma \mathrm{d}W^Q_t &\text{on } L^2(\Lambda)\\
		u^{g}_0(x) = u^0(x) &\text{in } \Lambda.
	\end{cases}
\end{equation}
In our example, we choose the time-horizon $[0,15]$, the space $\Lambda=[0,20]$, $k=1$, and $a=39/40$. These choices lead to two stable steady states, $u\equiv 0$ and $u\equiv 1$. As initial condition we choose
\begin{equation}
	u^0(x) = \left ( 1+\exp\left (-\frac{\sqrt{2}}{2}(x-5) \right ) \right )^{-1}
\end{equation}
In the deterministic case we get a traveling wave of the form $u^0(x+ct)$, where $c=\sqrt{2}(\frac12 - a)$ (cf. \cite{chen1992}). Figure \ref{00stochnocontrol} shows the solution in the deterministic case, and Figure \ref{10stochnocontrol} shows one realization of the solution in the stochastic case with $\sigma=0.5$.
\begin{figure}[H]
	\centering
	\begin{minipage}{0.45\textwidth}
		\centering
		\captionsetup{width=1\linewidth}
		\includegraphics[width=1.1\textwidth]{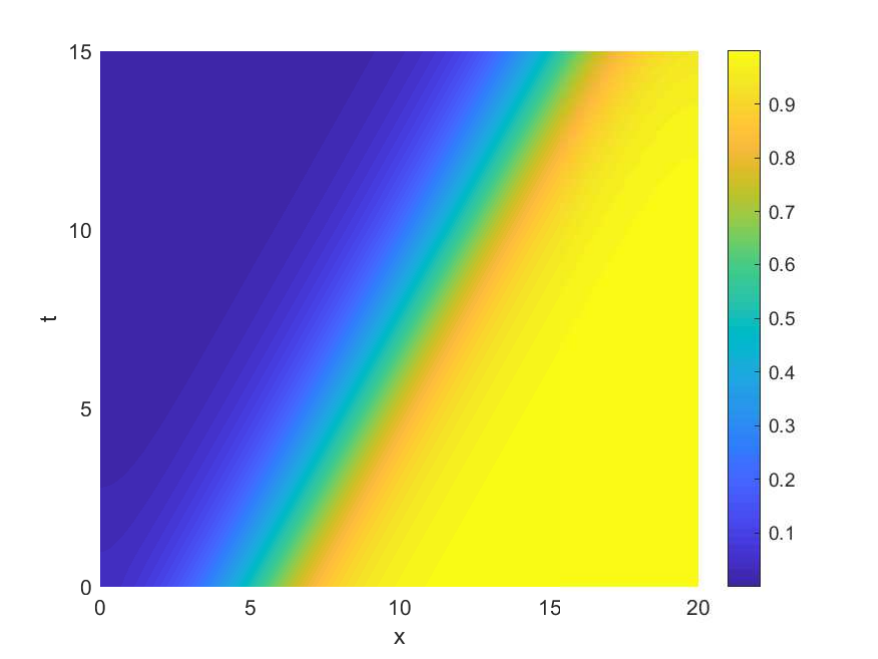}
		\caption[width=2.2\textwidth]{Solution without Control in the Deterministic Case}\label{00stochnocontrol}
	\end{minipage}\hfill
	\begin{minipage}{0.45\textwidth}
		\centering
		\captionsetup{width=1\linewidth}
		\hspace*{-1cm} 
		\includegraphics[width=1.1\textwidth]{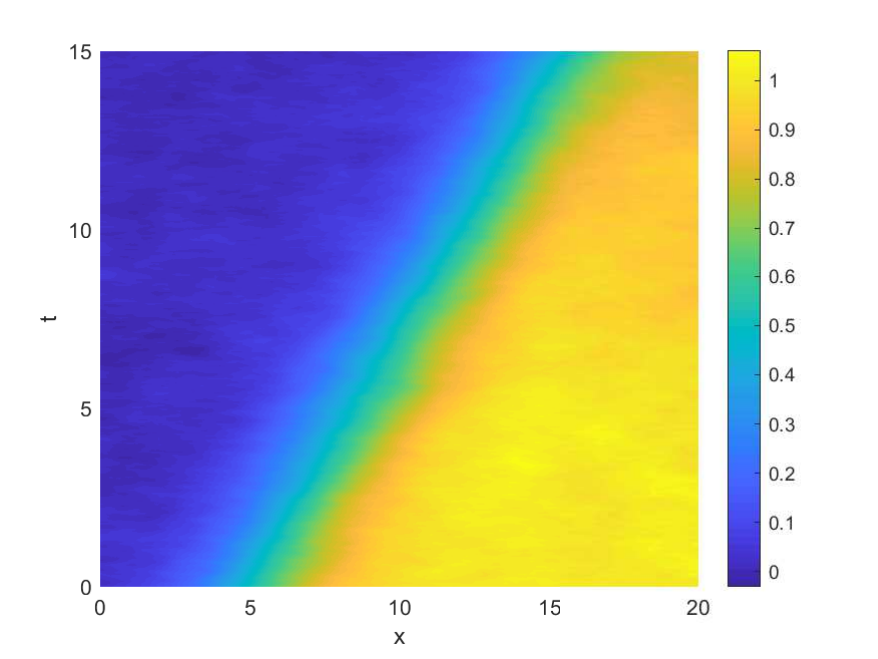}
		\hspace*{-1cm}
		\caption{Solution without Control in the Stochastic Case, $\sigma = 0.5$}\label{10stochnocontrol}
	\end{minipage}
\end{figure}
We can see that the traveling wave slowly travels to the right. Our objective is now to first speed up the wave and then change the direction of travel. To this end, we consider the cost functional given by
\begin{equation}
	J(g) = \mathbb{E} \left [ \frac{c_{\overline{\Lambda}}}{2} \int_0^T \int_{\Lambda} \left ( u^g_t(x)-u_{\overline{\Lambda}}(t,x) \right )^2 \mathrm{d}x \mathrm{d}t\right ]
\end{equation}
where $c_{\overline{\Lambda}}=1$, and the reference profile $u_{\overline{\Lambda}}$ is given by
\begin{equation}
	u_{\overline{\Lambda}}(t,x)= \begin{cases}
		\left (1+\exp\left (-\frac{\sqrt{2}}{2}(x-t-5) \right ) \right )^{-1},&t\leq \frac{T}{2}\\
		\left ( 1+\exp\left (-\frac{\sqrt{2}}{2}(x-(T-t)-5) \right ) \right )^{-1},&t>\frac{T}{2}
	\end{cases},
\end{equation}
for $(t,x)\in[0,T]\times\Lambda$.

With the algorithm from Section \ref{gradientdescent} we can approximate the optimal control. Let us apply the algorithm to the stochastic case with $\sigma=0.5$, the stopping criterion $\eta=0.05$ and $100$ Monte Carlo simulations for the approximation of the gradient. One realization of the solution with applied optimal control is displayed in Figure \ref{10stochoptimalcontrol}. Figure \ref{10stochcontrolfunction} shows the corresponding optimal control.

\begin{figure}[H]
	\centering
	\begin{minipage}{0.45\textwidth}
		\centering
		\captionsetup{width=1\linewidth}
		\includegraphics[width=1.1\textwidth]{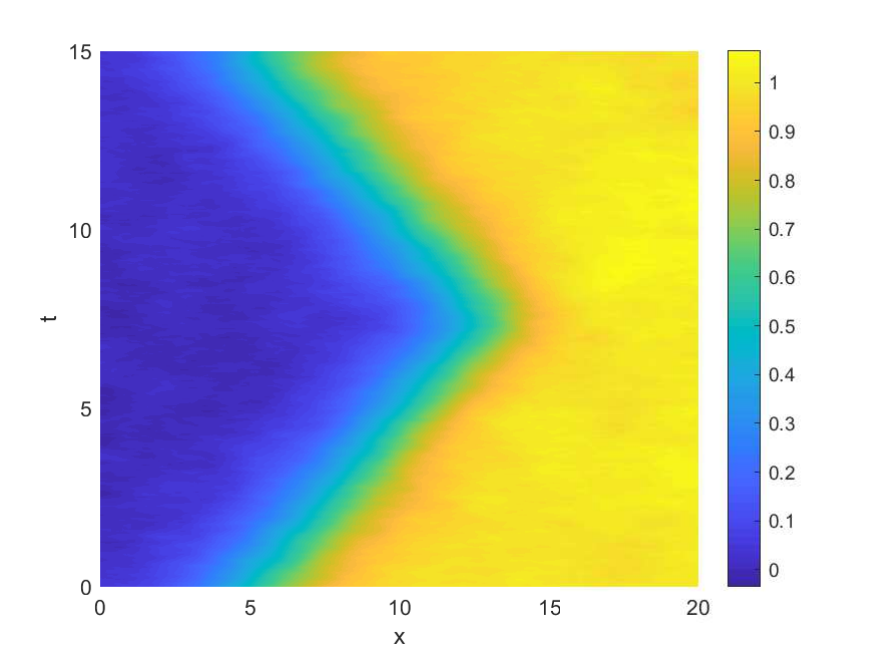}
		\caption{Solution with Optimal Control, $\sigma=0.5$}\label{10stochoptimalcontrol}
	\end{minipage}\hfill
	\begin{minipage}{0.45\textwidth}
		\centering
		\captionsetup{width=1\linewidth}
		\hspace*{-0.5cm}
		\includegraphics[width=1.1\textwidth]{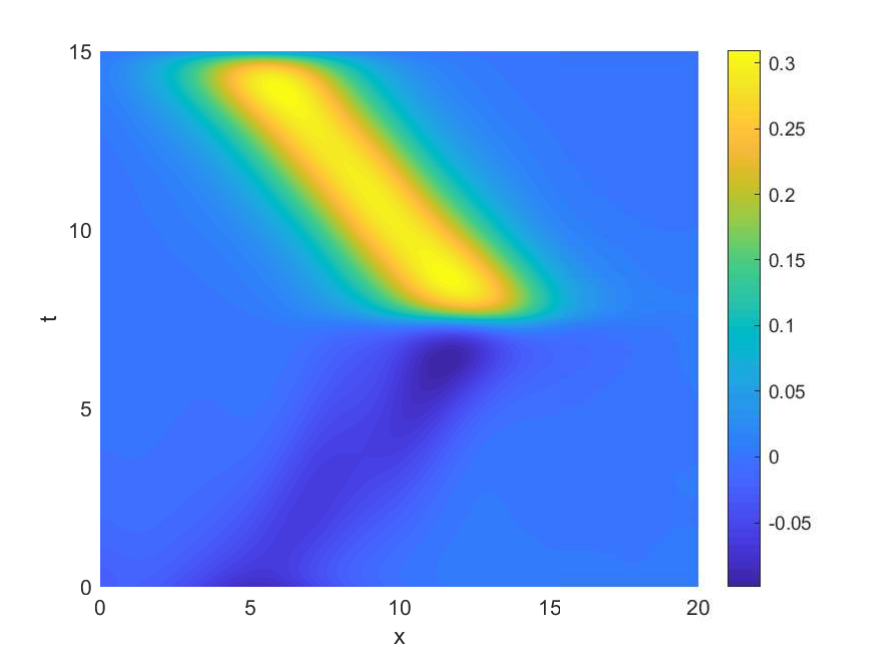}
		\caption{Optimal Control\newline\hfill}\label{10stochcontrolfunction}
	\end{minipage}
\end{figure}
\subsection{Comparison with the Control of the Deterministic System}\label{spdecase}
Simulations show that the optimal control for the deterministic system in the preceding example does not differ qualitatively from the optimal control for the stochastic system. This is because the fixed points $0$ and $1$ are stable. The situation changes, however, if one of the fixed points becomes unstable from one side, as the following example shows. Consider the state equation
\begin{equation}
	\begin{cases}
		\mathrm{d} u^{g}_t = \left [ \Delta u^{g}_t - (u^{g}_t)^3+( u^{g}_t)^2 + g(t) \right ] \mathrm{d} t + \sigma \mathrm{d}W^Q_t &\text{on } L^2(\Lambda)\\
		u^{g}_0(x) = u^0(x) &\text{in } \Lambda,
	\end{cases}
\end{equation}
where $\Lambda=[0,20]$, $T=30$ and $\sigma\in\mathbb{R}$. These choices lead to only one stable steady state, $u=1$ and one unstable steady state $u=0$. Now, as initial condition, we choose $u^g_0 = 0$, and consider the cost functional
\begin{equation}
	J(g) = \mathbb{E} \left [ \frac12 \int_{\Lambda} \left ( u^g_T(x) \right )^2 \mathrm{d}x \right ],
\end{equation}
i.e., we want the final state to be unchanged, in the unstable steady state $0$. In the deterministic case, the optimal control is clearly $g^{\ast}=0$, since we start in the steady state $x=0$ and without any forcing, we stay in this state and accomplish the minimal possible costs $J(g^{\ast})=0$. In the stochastic case, however, the noise term pushes the state out of the unstable steady state. Whenever the noise pushes the state above $0$, the dynamics of the state equation force the state towards the stable steady state $x=1$. As an illustration of this effect, Figure \ref{potential} displays the potential $F(x)$ of the nonlinearity $f$. Figure \ref{100stochnocontrol} shows one realization in the stochastic case without a control function.
\begin{figure}[H]
	\centering
	\begin{minipage}{0.45\textwidth}
		\centering
		\captionsetup{width=1\linewidth}
		\includegraphics[width=1.1\textwidth]{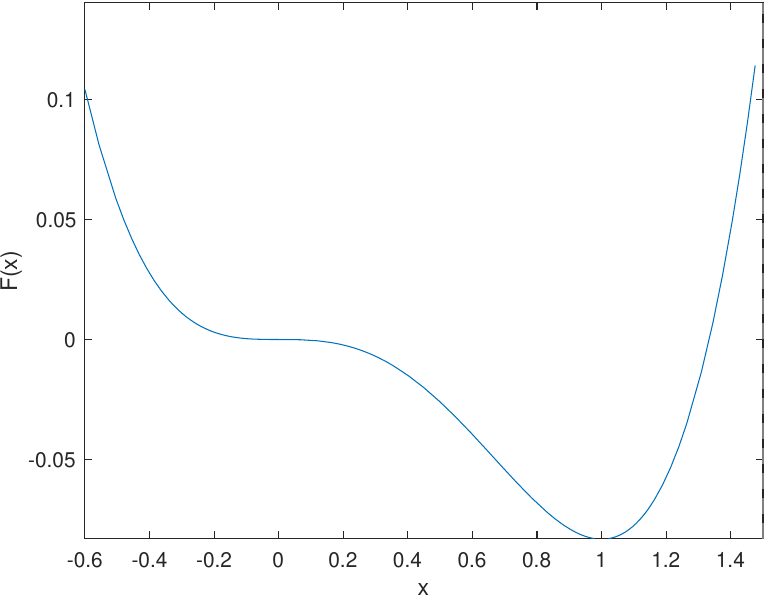}
		\caption{Potential\newline\hfill}\label{potential}
	\end{minipage}\hfill
	\begin{minipage}{0.45\textwidth}
		\centering
		\hspace*{-0.6cm}
		\captionsetup{width=1\linewidth}
		\includegraphics[width=1.1\textwidth]{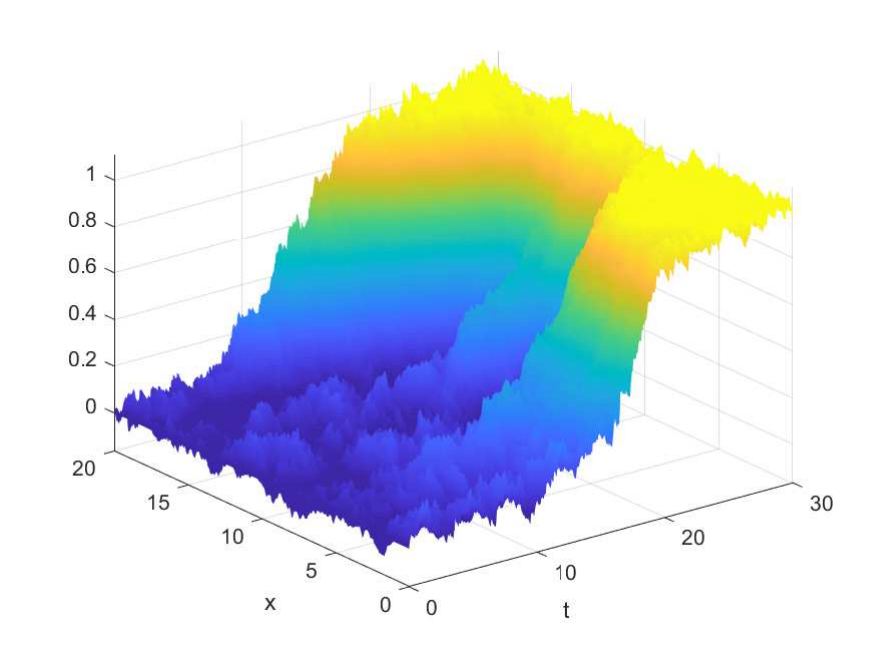}
		\caption{Solution without Control, $\sigma=1$}\label{100stochnocontrol}
	\end{minipage}
\end{figure}
When we introduce a control, the control tries to counteract this effect by keeping the state below $0$ for times $t<T$. This effect can be seen in the simulations, as well. As the stopping criterion we used $\eta=0.002$. Figures \ref{50stochoptcontrol} to \ref{50optimalstate} display the optimal controls in the stochastic case with $\sigma=0.5$ and one realization of the corresponding state.
\begin{figure}[H]
	\centering
	\begin{minipage}{0.45\textwidth}
		\centering
		\captionsetup{width=1\linewidth}
		\includegraphics[width=1.1\textwidth]{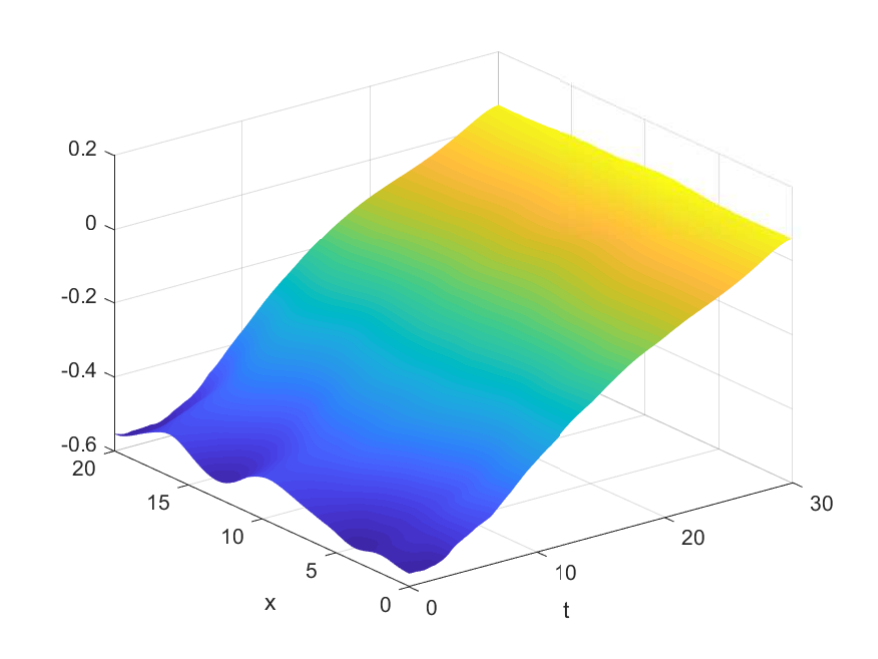}
		\caption{Optimal Control, $\sigma=0.5$\newline\hfill }\label{50stochoptcontrol}
	\end{minipage}\hfill
	\begin{minipage}{0.45\textwidth}
		\centering
		\hspace*{-1cm}
		\captionsetup{width=1\linewidth}
		\includegraphics[width=1.1\textwidth]{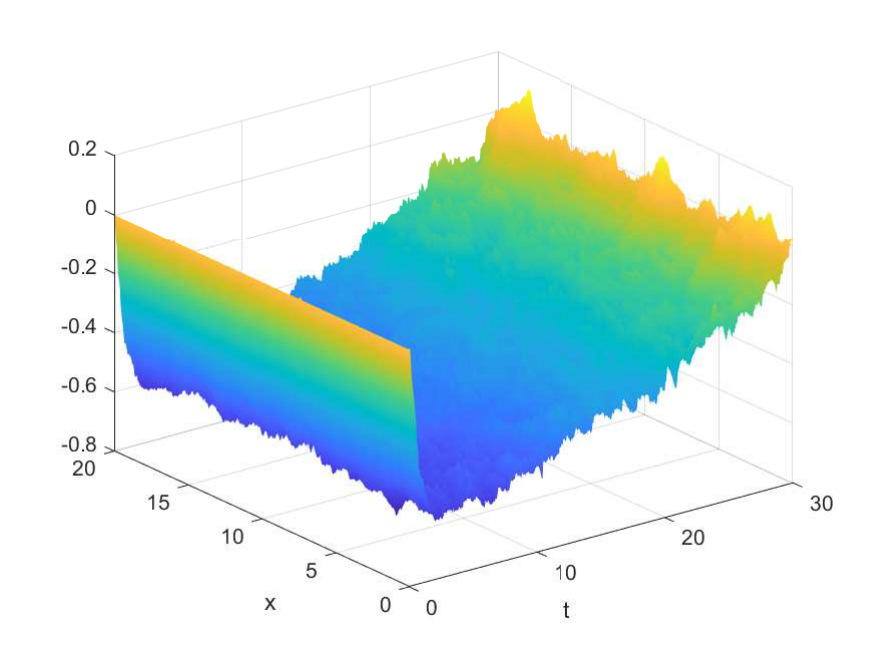}
		\caption{Solution with Optimal Control, $\sigma=0.5$}\label{50optimalstate}
	\end{minipage}
\end{figure}
\subsection{Mathematical Analysis in a Simplified Setting}\label{sdecase}

Since we are not able to prove the previous result in that setting rigorously, we consider a simpler similar example in which the optimal control in the deterministic case and the optimal control in the stochastic case differ.

Let us consider the stochastic ordinary differential equation
\begin{equation}\label{sde_state}
	\begin{cases}
		\mathrm{d}u^g_t = \left [ -V^{\prime}(u^g_t) + g(t) \right ] \mathrm{d}t + \sigma \mathrm{d}B_t, \quad t\in[0,T]\\
		u^g_0=0,
	\end{cases}
\end{equation}	
where $(B_t)_{t\geq 0}$ is a Brownian motion on $\mathbb{R}$, the potential $V:\mathbb{R}\to \mathbb{R}$ is given by
\begin{equation}
	V(x)=
	\begin{cases}
		\frac12 (\arctan(x) -x), &\text{for } x\geq 0\\
		0, &\text{for } x<0,
	\end{cases}
\end{equation}
and hence $-V^{\prime}$ is given by
\begin{equation}
	-V^{\prime}(x)=
	\begin{cases}
		\frac{x^2}{2(1+x^2)}, &\text{for } x\geq 0\\
		0, &\text{for } x<0.
	\end{cases}
\end{equation}
Notice that this potential qualitatively resembles the potential used in the previous example in the interval $[0,1]$. That is why we observe a similar effect in this example. We consider the cost functional
\begin{equation}\label{sde_costfunctional}
	J(g) := \mathbb{E} \left [ \frac12 \left ( u^g_T \right )^2 \right ].
\end{equation}

As in the previous example, the initial condition and the desired final state are both the unstable steady state $u=0$. Hence, in the deterministic case ($\sigma=0$), the optimal control is given by $g^{\ast} \equiv 0$, since the constant function $u\equiv 0$ solves the deterministic equation without control and the associated costs are zero.

Now, we are going to show that the optimal control in the stochastic case ($\sigma>0$), however, is not equal to zero. First, notice that the adjoint equation associated with our control problem is given by
\begin{equation}
	\begin{cases}
		-\partial_t p_t = -V^{\prime\prime} (u^g_t) p_t,\quad t\in[0,T]\\
		p_T=u^g_T,
	\end{cases}
\end{equation}
where $-V^{\prime\prime}$ is given by
\begin{equation}
	-V^{\prime\prime}(x)=
	\begin{cases}
		\frac{x}{(1+x^2)^2}, &\text{for } x\geq 0\\
		0, &\text{for } x<0.
	\end{cases}
\end{equation}
Hence, the solution of the adjoint equation is given explicitly by
\begin{equation}
	p_t = u^g_T \exp \left ( \int_t^T -V^{\prime\prime} (u^g_s) \mathrm{d}s \right ),
\end{equation}
and the gradient of the cost functional is given by
\begin{equation}
	\nabla J(g)(t) = \mathbb{E} [ p_t ] = \mathbb{E} \left [ u^g_T \exp \left ( \int_t^T -V^{\prime\prime}(u^g_s) \mathrm{d}s \right ) \right ].
\end{equation}
Now, we are going to show that the gradient for $g\equiv 0$ is not equal to zero and hence, $g\equiv 0$ is not an optimal control. To this end, consider
\begin{equation}
	\begin{split}
		\partial_t (\nabla J(g))(t) &= \mathbb{E} [ \partial_t p_t ] = \mathbb{E} \left [ V^{\prime\prime}(u^g_t) u^g_T \exp \left ( \int_t^T -V^{\prime\prime}(u^g_s) \mathrm{d}s \right ) \right ].
	\end{split}
\end{equation}
This yields
\begin{equation}
	\begin{split}
		&\liminf_{t\to T} \left \{ -\partial_t (\nabla J(g))(t) \right \}\\
		=&\liminf_{t\to T} \mathbb{E} \left [ - V^{\prime\prime}(u^g_t) u^g_T \exp \left ( \int_t^T -V^{\prime\prime}(u^g_s) \mathrm{d}s \right ) \right ]\\
		\geq &\mathbb{E} \left [ \liminf_{t\to T} \left \{- V^{\prime\prime}(u^g_t) u^g_T \exp \left ( \int_t^T -V^{\prime\prime}(u^g_s) \mathrm{d}s \right ) \right \} \right ]\\
		=& \mathbb{E} \left [ - V^{\prime\prime}(u^g_T) u^g_T \right ]\\
		=& \mathbb{E} \left [ \frac{(u^g_T)^2}{\left (1+\left ( u^g_T \right )^2 \right )^2} 1_{\{u^g_T>0\}} \right ] >0,
	\end{split}
\end{equation}
where the last part is strictly positive since $u_T$ has a strictly positive density with respect to the Lebesgue measure. Therefore, the gradient is not equal to zero and thus, $g\equiv 0$ is not an optimal control.

\begin{remark}
	Notice that we did not use that $g\equiv 0$ in this proof. This shows, that the optimal control in the stochastic case is unbounded.
\end{remark}

Figures \ref{sde_optcontrol} and \ref{sde_optstate} illustrate our results in case of the stochastic ordinary differential equation \eqref{sde_state} as the constraint and the cost functional \eqref{sde_costfunctional}.

\begin{figure}[H]
	\begin{minipage}{0.45\textwidth}
		\centering
		\captionsetup{width=1.05\linewidth}
		\includegraphics[width=1.05\linewidth]{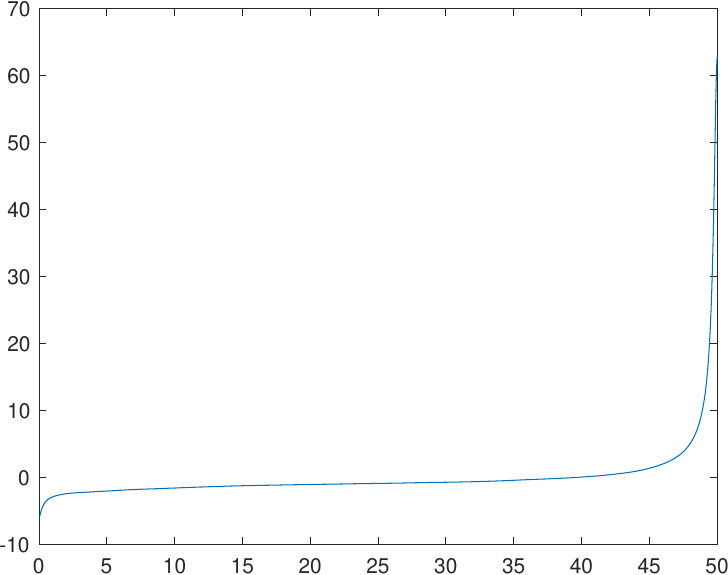}
		\caption{Optimal Control, $\sigma=1$\newline \hfill}\label{sde_optcontrol}
	\end{minipage}\hfill
	\begin{minipage}{0.45\textwidth}
		\centering
		\hspace*{-0.3cm}
		\captionsetup{width=1.05\linewidth}
		\includegraphics[width=1.05\linewidth]{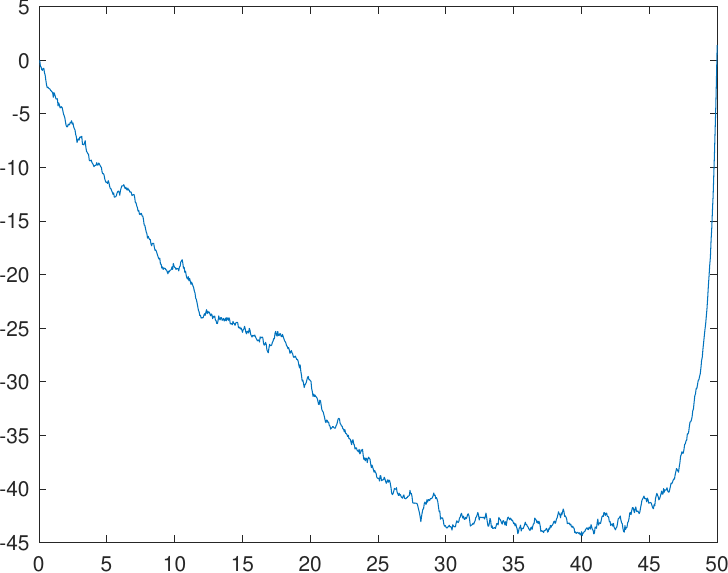}
		\caption{Solution with Optimal Control, $\sigma=1$}\label{sde_optstate}
	\end{minipage}
\end{figure}

\section*{Acknowledgement} This work has been funded by Deutsche Forschungsgemeinschaft (DFG) through grant CRC 910 ``Control of self-organizing nonlinear systems: Theoretical methods and concepts of application,'' Project (A10) ``Control of stochastic mean-field equations with applications to brain networks.'' The authors would like to thank the referee for their valuable feedback.



\end{document}